\date{}
\begin{document}
\title{A Generalization of $J$-Quasipolar Rings}
\author{T. Pekacar Calci}
\address{Tugce Pekacar Calci, Department of Mathematics, Ankara University,
Turkey} \email{tcalci@ankara.edu.tr}

\author{S. Halicioglu}
\address{Sait Halicioglu, Department of Mathematics, Ankara University,
Turkey} \email{halici@ankara.edu.tr}
\author{A. Harmanci}
\address{Abdullah Harmanci, Department of Mathematics, Hacettepe University,
Turkey} \email{harmanci@hacettepe.edu.tr}

%\begin{document}
%\maketitle
\newtheorem{thm}{Theorem}[section]
\newtheorem{lem}[thm]{Lemma}
\newtheorem{prop}[thm]{Proposition}
\newtheorem{cor}[thm]{Corollary}
\newtheorem{df}[thm]{Definition}
\newtheorem{nota}{Notation}
\newtheorem{note}[thm]{Remark}
\newtheorem{ex}[thm]{Example}
\newtheorem{exs}[thm]{Examples}

\begin{abstract} In this paper, we introduce a class of quasipolar rings
 which  is a generalization of $J$-quasipolar rings. Let $R$ be a
ring with identity. An element $a \in R$ is called {\it
$\delta$-quasipolar} if there exists $p^2 = p\in comm^2(a)$ such
that $a + p$ is contained in $\delta(R)$, and the ring $R$ is
called {\it $\delta$-quasipolar} if every element of $R$ is
$\delta$-quasipolar. We use  $\delta$-quasipolar rings to extend
some results of $J$-quasipolar rings. Then some of the main
results of $J$-quasipolar rings are special cases of our results
for this general setting. We give many characterizations and
investigate general properties of $\delta$-quasipolar rings.
\end{abstract}

\subjclass[2010]{Primary 16S50; Secondary 16S70, 16U99}
\keywords{Quasipolar ring, $\delta$-clean ring, $J$-quasipolar
ring}

\maketitle

\section{Introduction}
Throughout this paper all rings are associative with identity
unless otherwise stated. Let $R$ be a ring. According to Koliha
and Patricio \cite{KP}, the \emph{commutant} and \emph{double
commutant} of an element $a\in R$ are defined by $comm(a)=\{x\in
R~|~xa=ax\}$, $comm^2(a)=\{x\in R~|~xy=yx~\mbox{for all}~y\in
comm(a)\}$, respectively. If $R^{qnil}=\{a\in R~|~1+ax\in
U(R)~\mbox{for every}~x\in comm(a)\}$ and $a\in R^{qnil}$,  then
$a$ is said to be \emph{quasinilpotent} (see \cite{Ha}). The
element $a$ is called {\it quasipolar} if there exists $p^2 = p\in
R$ such that $p\in comm^2(a)$, $a + p$ is invertible in $R$ and
$ap\in R^{qnil}$. Any idempotent $p$ satisfying the above
conditions is called a {\it spectral idempotent} of $a$, and this
term is borrowed from spectral theory in Banach algebra and it is
unique for $a$.

Quasipolar rings have been studied by many ring theorists (see
\cite{CC1}, \cite{GHH}, \cite{GHH1},
 \cite{Ha}, \cite{KP} and \cite{YC}). In \cite{GHH1},  the element $a\in R$ is called
\textit{nil-quasipolar} if there
 exists $p^2=p\in comm^2(a)$
such that $a+p$ is nilpotent, the idempotent $p$ is called a
\textit{nil-spectral idempotent} of $a$. The ring $R$ is said to
be \textit{nil-quasipolar} if every element of $R$ is
nil-quasipolar.  Recently, $J$-quasipolar rings are studied in
\cite{CC3}.
 The element $a$ is called {\it
$J$-quasipolar} if there exists $p^2 = p\in R$ such that $p\in$
$comm^2(a)$ and $a + p\in J(R)$,  $p$ is called a
\textit{$J$-spectral idempotent} of $a$. The ring $R$ is said to
be {\it $J$-quasipolar} if every element of $R$ is $J$-quasipolar.
 Motivated by these, we introduce a new class of
quasipolar rings which  is a generalization of $J$-quasipolar
rings. By using $\delta$-quasipolar rings, we extend some results
of $J$-quasipolar rings.

An outline of the paper is as follows: The second section deals
with $\delta$-quasipolar rings. We prove various basic
characterizations and properties of $\delta$-quasipolar rings.
 We
investigate conditions under which a $\delta$-quasipolar ring is
quasipolar and a quasipolar ring is $\delta$-quasipolar (Corollary
\ref{abdelta} and Proposition \ref{locquasi}). It is proven that
every $J$-quasipolar ring is $\delta$-quasipolar. We supply an
example to show that all $\delta$-quasipolar rings need not be
$J$-quasipolar. Among others the $\delta$-quasipolarity of Dorroh
extensions and some classes of matrix rings are investigated. In
Section 3, we introduce an upper class of $\delta$-quasipolar
rings, namely, weakly $\delta$-quasipolar rings. We show that
every direct summand of a weakly $\delta$-quasipolar ring is
weakly $\delta$-quasipolar and every direct product of weakly
$\delta$-quasipolar rings is weakly $\delta$-quasipolar, and we
give some properties of such rings.

In what follows, $\Bbb{Z}$ and $\Bbb Q$ denote the ring of
integers and the ring of rational numbers and for a positive
integer $n$, $\Bbb{Z}_n$ is the ring of integers modulo $n$. For a
positive integer $n$, let $Mat_{n}(R)$ denote the ring of all
$n\times n$ matrices and $T_n(R)$  the ring of all $n\times n$
upper triangular matrices with entries in $R$. We write $J(R)$ and
$nil(R)$ for the Jacobson radical of $R$ and the set of nilpotent
elements of $R$,  respectively.

\section{$\delta$-quasipolar Rings}

In this section we introduce  the concept of $\delta$-quasipolar
rings and investigate some properties of such rings. We show that
every quasipolar ring need not be $\delta$-quasipolar (Example
\ref{aksi}). It is proven that every $J$-quasipolar ring is
$\delta$-quasipolar and the converse does not hold in general (see
Example \ref{ilki}). Among others we extend some results of
$J$-quasipolar rings for this general setting.

A right ideal $I$ of the ring $R$ is said to be {\it
$\delta$-small} in $R$ if whenever $R=I+K$ with $R/K$ singular
right $R$-module for any right ideal $K$ then $R=K$. In \cite{Zh},
the ideal $\delta(R)$ is introduced as a sum of $\delta$-small
right ideals of $R$.  We begin with the equivalent conditions for
$\delta(R)$ which is proved in \cite[Theorem 1.6]{Zh} for an easy
reference for the reader.

\begin{lem}\label{besli} Given a ring $R$, each of the following sets is equal to
$\delta(R)$.\begin{enumerate} \item[(1)] $R_1$ = the intersection
of all essential maximal right ideals of $R$.\item[(2)] $R_2$ =
the unique largest $\delta$-small right ideal of $R$.\item[(3)]
$R_3 = \{x\in R\mid xR + K_R = R$ implies $K_R$ is a direct
summand of $R_R\}$.\item[(4)] $R_4 = \bigcap\{$ideals $P$ of
$R\mid R/P$ has a faithful singular simple module$\}$.\item[(5)]
$R_5 = \{x\in R\mid$ for all $y\in R$ there exists a semisimple
right ideal $Y$ of $R$ such that $(1 + xy)R\oplus Y = R_R\}$.
\end{enumerate}
\end{lem}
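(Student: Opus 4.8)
The plan is to fix $R_1$ as the reference set and prove $R_1=R_2=R_3=R_4=R_5=\delta(R)$ by a round of inclusions, mirroring the classical characterizations of the Jacobson radical but with the following $\delta$-analogues of the standard module facts playing the role of the radical's basic lemmas: (i) the class of singular right modules is closed under submodules and homomorphic images; (ii) a maximal right ideal $M$ is essential if and only if $R/M$ is singular, equivalently if and only if $M$ is not a direct summand; and (iii) a projective simple module is never singular, since if $S=eR$ is a summand then $r(e)=(1-e)R$ is not essential. These three facts are what convert ``maximal / small / invertible'' in the Jacobson picture into ``essential maximal / $\delta$-small / splits off a semisimple summand.''

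First I would dispose of $R_2=\delta(R)$. Using (i) one checks that a finite sum of $\delta$-small right ideals is $\delta$-small: if $I_1+I_2+K=R$ with $R/K$ singular, then $R/(I_2+K)$ is singular as a quotient of $R/K$, so $I_1$ $\delta$-small gives $I_2+K=R$, and then $I_2$ $\delta$-small gives $K=R$. Since any element of $\delta(R)=\sum\{\delta\text{-small right ideals}\}$ lies in a finite subsum, $\delta(R)$ is itself $\delta$-small, hence the unique largest such, i.e. $R_2$. Next, $\delta(R)\subseteq R_1$: a $\delta$-small ideal $I$ must lie inside every essential maximal $M$, since otherwise $I+M=R$ with $R/M$ singular by (ii), forcing $M=R$. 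For the reverse $R_1\subseteq\delta(R)$ I would show $R_1$ is $\delta$-small: given $R_1+K=R$ with $R/K$ singular and $K\neq R$, choose a maximal right ideal $M\supseteq K$; then $R/M$ is a singular quotient of $R/K$, so $M$ is essential, whence $R_1\subseteq M$ and $R=R_1+K\subseteq M$, a contradiction. Thus $R_1=R_2=\delta(R)$. The identity $R_1=R_4$ is the $\delta$-analogue of ``Jacobson radical $=$ intersection of annihilators of simple modules'': essential maximal right ideals $M$ correspond to singular simple modules $R/M$, and passing to $P=\mathrm{ann}_R(R/M)$ (the largest ideal inside $M$) and back gives $\bigcap M=\bigcap P$; the only care needed is that singularity is read consistently, which (i) and (ii) handle.

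The element-wise sets $R_3$ and $R_5$ I would handle with a single tool, the \emph{complement/supplement} argument: if $A$ is $\delta$-small and $A+K=R$, pick a complement $C$ of $K$, so that $C\oplus K$ is essential and $R/(C\oplus K)$ is singular by (i); since $A+(C\oplus K)=R$, $\delta$-smallness forces $C\oplus K=R$, i.e. $K$ is a direct summand. Applied with $A=xR$ this gives $\delta(R)\subseteq R_3$; applied with $A=xyR$ and $K=(1+xy)R$ (note $(1+xy)R+xyR=R$) it produces a complement $C\le xyR$ of $(1+xy)R$, which is the candidate for the semisimple $Y$ of $R_5$. The converse inclusions $R_3\subseteq R_1$ and $R_5\subseteq R_1$ use (iii): if $x\notin M$ for some essential maximal $M$, then $xR+M=R$ (resp. $1+xy\in M$ for a suitable $y$), which would make $M$ a proper direct summand (resp. would exhibit the simple $R/M$ as a quotient of the projective semisimple $Y$, hence projective and nonsingular), contradicting that $M$ is essential.

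The main obstacle is the semisimplicity of the complement $C$ in the direction $\delta(R)\subseteq R_5$: it is easy to see that $C$ is a direct summand of $R_R$ lying in $\delta(R)$, hence $C=eR$ for an idempotent $e\in\delta(R)$, but I must upgrade ``summand'' to ``semisimple.'' I expect to do this by iterating the same supplement argument \emph{inside} $eR$: for any submodule $D\le eR$, form $D\oplus(1-e)R$, take a complement $C'$, and use that $eR$ is $\delta$-small together with $eR+(1-e)R=R$ to conclude $R=D\oplus(1-e)R\oplus C'$, whence $eR=D\oplus eC'$. Since every submodule of $eR$ then splits off, $eR=C$ is semisimple. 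This step, showing that a projective direct summand contained in $\delta(R)$ is automatically semisimple, is where the restriction to singular quotients in the definition of $\delta$-small does its real work, and it is the one place where the argument departs in substance from the classical Jacobson-radical proof.
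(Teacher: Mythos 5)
The paper does not prove this lemma at all: it is quoted from Zhou's Theorem 1.6 (\cite{Zh}) ``for an easy reference for the reader,'' so there is no in-paper argument to compare against. Judged on its own terms, your sketch is essentially a correct reconstruction of Zhou's original proof: the three module-theoretic facts you isolate, the closure of $\delta$-small right ideals under finite sums, and above all the complement/supplement argument (Zhou's Lemma 1.2) are exactly the ingredients he uses, and the chain of inclusions you set up does cover all five sets.

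Two points should be written out rather than asserted. First, when you ``produce a complement $C\le xyR$ of $(1+xy)R$,'' you need the \emph{relative} form of the complement lemma: choose $C\le A$ maximal among submodules of $A$ with $C\cap K=0$; then $(C\oplus K)\cap A$ is essential in $A$, and since $A+K=R$ the second isomorphism theorem gives $R/(C\oplus K)\cong A/\bigl(A\cap(C\oplus K)\bigr)$, which is singular, so $\delta$-smallness of $A$ forces $C\oplus K=R$ \emph{with} $C\subseteq A$. Locating $C$ inside $A\subseteq\delta(R)$ is load-bearing: your later iteration proving $eR$ semisimple requires $eR$ to be $\delta$-small, which it is only because $eR=C\subseteq\delta(R)$. (Also note that the singularity of $R/(C\oplus K)$ comes from essentiality of $C\oplus K$, not from your closure property (i).) Second, in the $R_1=R_4$ step, ``singular'' over $R/P$ and over $R$ are genuinely different conditions, and the equality of the two intersections needs the annihilator bookkeeping $P=\bigcap_{x\notin M}(M:x)$ with each $(M:x)$ an essential maximal right ideal; you flag this but do not resolve it. Neither issue is a wrong turn --- both are standard and repairable --- and the remaining steps ($R_1=R_2=\delta(R)$, and $R_3,R_5\subseteq R_1$ via essential maximal right ideals and the non-singularity of projective simple modules) are correct as written.
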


Now we give our main definition.

\begin{df}{\rm Let $R$ be a ring. An element $a \in R$ is called {\it
$\delta$-quasipolar} if there exists $p^2 = p\in comm^2(a)$ such
that $a + p\in \delta(R)$ and  $p$ is called a {\it
$\delta$-spectral idempotent}. The ring $R$ is called {\it
$\delta$-quasipolar} if every element of $R$ is
$\delta$-quasipolar.}
\end{df}

The following are examples for $\delta$-quasipolar rings.

\begin{ex}\label{ilk}{\rm (1) Every semisimple ring and every Boolean ring is
$\delta$-quasipolar.\\
(2)  Since $J(\Bbb Q) = \delta(\Bbb Q) = \Bbb Q$, $\Bbb Q$ is
$\delta$-quasipolar. On the other hand,   $\Bbb Z$ is not
$\delta$-quasipolar since $\delta(\Bbb Z) = 0$. }
\end{ex}

One may suspects that every quasipolar ring is
$\delta$-quasipolar. But the following example erases the
possibility.

\begin{ex}\label{aksi} {\rm Let $p$ be a prime integer with $p\geq 3$ and $R = \Bbb
Z_{(p)}$ the localization of $\Bbb Z$ at the ideal $(p)$. By
\cite[Example 2.8]{CC3}, $R$ is a quasipolar ring. Since $J(R) =
\delta(R)$, it is not $\delta$-quasipolar. }\end{ex}

Let $S_r$ denote the right socle of the ring $R$, that is, $S_r$
is the sum of minimal right ideals of $R$.  We now prove that the
class of $J$-quasipolar rings is a subclass of $\delta$-quasipolar
rings.

\begin{lem}\label{iki}  If $R$ is a $J$-quasipolar ring, then  $R$ is $\delta$-quasipolar.
The converse holds if $S_r \subseteq J(R)$.
\end{lem}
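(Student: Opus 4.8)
The plan is to split the statement into the easy forward implication and the conditional converse, and in both cases to reduce everything to a comparison between $\delta(R)$ and $J(R)$. The governing observation is that $J(R)\subseteq\delta(R)$ always holds: by Lemma \ref{besli}(1), $\delta(R)$ is the intersection of the \emph{essential} maximal right ideals, whereas $J(R)$ is the intersection of \emph{all} maximal right ideals, so $J(R)$ is an intersection over a larger family and is therefore contained in $\delta(R)$.

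For the forward direction I would argue as follows. Given a $J$-quasipolar element $a$, choose its $J$-spectral idempotent $p^2=p\in comm^2(a)$ with $a+p\in J(R)$. Since $J(R)\subseteq\delta(R)$, the same idempotent gives $a+p\in\delta(R)$ with $p\in comm^2(a)$, so $a$ is $\delta$-quasipolar. As $a$ is arbitrary, $R$ is $\delta$-quasipolar.

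For the converse under the hypothesis $S_r\subseteq J(R)$, the key claim I would prove is that this hypothesis forces $\delta(R)=J(R)$; once this is established, every $\delta$-spectral idempotent is automatically a $J$-spectral idempotent and the conclusion is immediate. Since $J(R)\subseteq\delta(R)$ always, it suffices to show $\delta(R)\subseteq J(R)$, and by Lemma \ref{besli}(1) this follows once I show that every maximal right ideal of $R$ is essential. Suppose to the contrary that some maximal right ideal $M$ is not essential. Then $M$ has a nonzero complement, so $R=M\oplus C$ for some nonzero right ideal $C$; since $M$ is maximal, $C\cong R/M$ is simple, hence $C$ is a minimal right ideal and so $C\subseteq S_r$. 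By hypothesis $S_r\subseteq J(R)$, and $J(R)\subseteq M$ because $J(R)$ lies in every maximal right ideal; thus $C\subseteq M$, contradicting $M\cap C=0$ with $C\neq 0$.

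Therefore every maximal right ideal is essential, so the intersection of the essential maximal right ideals coincides with the intersection of all maximal right ideals, and Lemma \ref{besli}(1) together with the definition of $J(R)$ yields $\delta(R)=J(R)$. Consequently, for any $a\in R$ a $\delta$-spectral idempotent $p^2=p\in comm^2(a)$ with $a+p\in\delta(R)=J(R)$ is a $J$-spectral idempotent, so $R$ is $J$-quasipolar. I expect the main obstacle to be the converse, and within it the crux is recognizing that $S_r\subseteq J(R)$ rules out non-essential maximal right ideals via the direct-summand/minimal-right-ideal correspondence; everything else is a formal consequence of the characterization of $\delta(R)$ in Lemma \ref{besli}(1).
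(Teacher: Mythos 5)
Your proof is correct, and for the converse it takes a genuinely different route from the paper. The paper disposes of the converse by quoting Zhou's result that $\delta(R)/S_r=J(R/S_r)$ (\cite[Corollary 1.7]{Zh}) and combining it with the standard identity $J(R)/S_r=J(R/S_r)$ valid when $S_r\subseteq J(R)$, to conclude $J(R)=\delta(R)$. You instead prove $\delta(R)=J(R)$ from scratch: using only Lemma \ref{besli}(1), you show that $S_r\subseteq J(R)$ forces every maximal right ideal to be essential, via the observation that a non-essential maximal right ideal $M$ would admit a complement $C\cong R/M$, which is a minimal right ideal and hence lands in $S_r\subseteq J(R)\subseteq M$, contradicting $M\cap C=0$. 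That argument is sound (the step $M+C=R$ follows from maximality of $M$ together with $C\neq 0$ and $M\cap C=0$), and once all maximal right ideals are essential, the two intersections defining $J(R)$ and $\delta(R)$ range over the same family. What your approach buys is self-containment and transparency --- the reader sees exactly why the socle hypothesis kills non-essential maximal right ideals --- at the cost of a slightly longer argument; the paper's citation is shorter but opaque without consulting \cite{Zh}. The forward direction is identical in both: $J(R)\subseteq\delta(R)$ makes any $J$-spectral idempotent a $\delta$-spectral idempotent.
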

\begin{proof}  The first assertion is clear since $J(R)\subseteq \delta(R)$.
Assume that $R$ is $\delta$-quasipolar. If $S_r \subseteq J(R)$,
then $J(R)/S_r=J(R/S_r)=\delta(R)/S_r$ \cite[Corollary 1.7]{Zh}
and we have $J(R)=\delta(R)$. Hence, $R$ is $J$-quasipolar.
\end{proof}

The converse of Lemma \ref{iki} is not true in general as the
following example shows.

\begin{ex}\label{ilki} Let $F$ be a
field and consider the ring
$R=\begin{bmatrix}F&F\\F&F\end{bmatrix}$. Then $R$ is a semisimple
ring and $R = \delta(R)$ and $J(R) = 0$. Hence $R$ is
$\delta$-quasipolar and it is not $J$-quasipolar.
%\item delta-clean but not delta-quasipolar ring araniyor.}
\end{ex}

\begin{lem}\label{inv} Let $R$ be a ring. Then we have the following.
\begin{enumerate}
\item[(1)]  If $a, u\in R$ and $u$ is invertible, then $a$ is
$\delta$-quasipolar if and only if $u^{-1}au$ is
$\delta$-quasipolar.
\item[(2)] The element $a\in R$ is $\delta$-quasipolar if and only $-1-a$ is
$\delta$-quasipolar.
\item[(3)] If $R$ is a $\delta$-quasipolar ring with $\delta(R) =
J(R)$, then the spectral idempotent for any invertible element in
$R$ is the identity of $R$.
\end{enumerate}
\end{lem}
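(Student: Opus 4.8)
The three parts all rest on the structural fact, recorded in Lemma \ref{besli} and established in \cite{Zh}, that $\delta(R)$ is a two-sided ideal of $R$. My plan in each case is to guess the correct idempotent and then verify the three defining conditions of $\delta$-quasipolarity: idempotency, membership in the appropriate double commutant, and membership of the relevant sum in $\delta(R)$. For part (1) the natural candidate for $u^{-1}au$ is $q = u^{-1}pu$, where $p$ is a $\delta$-spectral idempotent of $a$. Idempotency $q^2 = u^{-1}p^2u = q$ is immediate. The key observation is that conjugation by $u$ gives a bijection $comm(u^{-1}au) \to comm(a)$, $y \mapsto uyu^{-1}$; applying the relation $p(uyu^{-1}) = (uyu^{-1})p$ coming from $p \in comm^2(a)$ and conjugating back yields $qy = yq$, so $q \in comm^2(u^{-1}au)$. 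Finally $u^{-1}au + q = u^{-1}(a+p)u \in \delta(R)$ because $\delta(R)$ is a two-sided ideal. The converse is the same statement applied with $u^{-1}$ in place of $u$.

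For part (2) I would take $q = 1 - p$, which is again idempotent. Since $-1$ is central, adjoining it does not change commutants, so $comm(-1-a) = comm(a)$ and hence $comm^2(-1-a) = comm^2(a)$; as $comm^2(a)$ is a subring containing $1$ and $p$, we get $q = 1-p \in comm^2(-1-a)$. Then $(-1-a) + (1-p) = -(a+p)$, which lies in $\delta(R)$ because $\delta(R)$ is closed under negation. The converse follows by applying the statement to $-1-a$, using $-1-(-1-a) = a$.

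For part (3), assume $\delta(R) = J(R)$ and let $a$ be invertible with $\delta$-spectral idempotent $p$, so $a + p \in J(R)$. The crux is the identity $(a+p)(1-p) = a(1-p)$, which places $a(1-p)$ in the right ideal $J(R)$; left-multiplying by $a^{-1}$ and using that $J(R)$ is also a left ideal yields $1-p \in J(R)$. Consequently $p = 1-(1-p)$ is a unit, and since $p(1-p) = 0$ we obtain $1-p = p^{-1}p(1-p) = 0$, that is, $p = 1$.

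The main obstacle I anticipate is conceptual rather than computational. In part (1) one must correctly track how the double commutant transforms under conjugation, since this is the only nonformal step; everything else reduces to the ideal property of $\delta(R)$. In part (3) the decisive idea is multiplication by $1-p$, which exploits the invertibility of $a$ to force the spectral idempotent into $J(R)$, after which the absence of nonzero idempotents in $J(R)$ finishes the argument. All three parts ultimately lean on the characterizations of $\delta(R)$ supplied by Lemma \ref{besli}.
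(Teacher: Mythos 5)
Your proposal is correct and follows essentially the same route as the paper: conjugating the spectral idempotent in (1), passing to $1-p$ in (2), and forcing $p$ to be an invertible idempotent in (3). The only cosmetic difference is in (3), where the paper left-multiplies $u+p\in J(R)$ by $u^{-1}$ to get $1+u^{-1}p\in J(R)$ and deduces that $u^{-1}p$ (hence $p$) is a unit, while you right-multiply by $1-p$ to place $1-p$ in $J(R)$ first; both end by observing that an invertible idempotent equals $1$.
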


\begin{proof} (1) Assume that $a$ is $\delta$-quasipolar. Let $p^2
= p\in comm^2(a)$ such that $a + p\in \delta(R)$. Let $x\in
comm(u^{-1}au)$. Then $(uxu^{-1})a=a(uxu^{-1})$. Since $p\in
comm^2(a)$, $(uxu^{-1})p=p(uxu^{-1})$. Hence $(u^{-1}pu)^2 =
u^{-1}pu \in comm^2(u^{-1}au)$. Since $\delta(R)$ is an ideal of
$R$, $u^{-1}(a+p)u = u^{-1}au + u^{-1}pu\in\delta(R)$. Thus
$u^{-1}au$ is $\delta$-quasipolar. Conversely, if $u^{-1}au$ is
$\delta$-quasipolar, then by the preceding proof
$u(u^{-1}au)u^{-1} =
a$ is $\delta$-quasipolar.\\
(2) Assume that $a$ is $\delta$-quasipolar. Let $p^2 = p\in
comm^2(a)$ such that $a + p = r\in \delta(R)$. Then $-1-a +(1-p)
=-r\in \delta(R)$. Then $1-p\in comm^2(-1-a)$ and $1-p$ is the
spectral idempotent of $-1-a$. Conversely, if $-1-a$ is
$\delta$-quasipolar, then from what we have proved that $-1-(-1-a)
= a$ is quasipolar.\\
(3) Let $u$ be an invertible element in $R$. There exists $p^2 =
p\in comm^2(u)$ such that $u + p\in \delta(R)$. Then $1 +
u^{-1}p\in J(R)$. Hence $u^{-1}p$ and so $p$ is invertible. Thus
$p = 1$.
\end{proof}

In \cite[Corollary 2.3]{CC3}, it is proved that if  $R$ is a
$J$-quasipolar ring, then $2\in J(R)$. In this direction we prove
the following.

\begin{lem}\label{ucin}  If $R$ is a $\delta$-quasipolar ring, then $2\in
\delta(R)$. \end{lem}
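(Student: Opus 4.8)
The plan is to apply the $\delta$-quasipolar hypothesis to the single most convenient element, namely the identity $1\in R$, and then to exploit the idempotent it produces together with the fact that $\delta(R)$ is an ideal. The point is that testing $\delta$-quasipolarity on $1$ collapses the abstract definition into a one-line computation with complementary idempotents.

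First I would note that $comm(1)=R$, so $comm^2(1)=Z(R)$, the center of $R$. Since $R$ is $\delta$-quasipolar, the element $1$ is $\delta$-quasipolar, so there exists an idempotent $p=p^2\in comm^2(1)$ with $1+p\in\delta(R)$. (In fact centrality of $p$ plays no role in what follows; all I shall use is that $p$ is idempotent and that $1+p$ lies in $\delta(R)$.)

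The key step is to multiply $1+p$ by the complementary idempotent $1-p$. Because $\delta(R)$ is a right ideal — indeed a two-sided ideal, as follows from the characterizations in Lemma \ref{besli} — we obtain $(1+p)(1-p)\in\delta(R)$; and since $p^2=p$, this product equals $1-p$. Thus both $1+p$ and $1-p$ belong to $\delta(R)$, and adding them yields $2=(1+p)+(1-p)\in\delta(R)$, which is exactly the claim.

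I do not expect a genuine obstacle here. The only points that need care are the ideal property of $\delta(R)$, which is guaranteed by Lemma \ref{besli}, and the initial choice $a=1$, which is what turns the definition into a direct computation. This mirrors the $J$-quasipolar analogue (\cite[Corollary 2.3]{CC3}), with $J(R)$ simply replaced by $\delta(R)$ throughout.
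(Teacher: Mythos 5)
Your proof is correct and is essentially the paper's own argument: the paper also applies the definition to $a=1$ to get $1+p\in\delta(R)$, multiplies by $p$ to obtain $2p$ and hence $1-p\in\delta(R)$, and adds; your multiplication by $1-p$ instead of $p$ is a trivially equivalent rearrangement of the same computation.
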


\begin{proof} For the identity $1$, there exists $p^2 = p\in R$
such that $1 + p\in \delta(R)$. Multiplying the latter by $p$, we
have $2p\in \delta(R)$. Hence $1- p\in \delta(R)$. Thus $2\in
\delta(R)$. \end{proof}

Lemma \ref{ucin} can be used
to determine whether given rings are
$\delta$-quasipolar.

\begin{ex}\label{ikin}{\rm (1) The ring $\Bbb Z_3$ is a semisimple ring
and $\delta$-quasipolar but the ring $R = \begin{bmatrix}\Bbb
Z_3&\Bbb Z_3\\0&\Bbb Z_3\end{bmatrix}$ is not $\delta$-quasipolar
since $\delta(R) = \begin{bmatrix}0&\Bbb Z_3\\0&\Bbb
Z_3\end{bmatrix}$ and $2$ does not contained in
$\delta(R)$.\\
(2) Let $R = \{(a_{ij})\in $T$_n(\Bbb Z_3) \mid a_{11} = a_{22}=
\cdots = a_{nn}\}$. $\Bbb Z_3$ is $\delta$-quasipolar but $R$ is
not since $\delta(R) = \{(a_{ij})\in $T$_n(\Bbb Z_3) \mid a_{11} =
a_{22}= \cdots = a_{nn}= 0\}$ and $2$ does not contained in
$\delta(R)$. }
\end{ex}

Our next endeavor is to find conditions under which  a
$\delta$-quasipolar ring is quasipolar or vice versa. In this
direction recall that a ring $R$ is called {\it local} if it has
only one maximal left ideal, equivalently, maximal right ideal.

\begin{prop}\label{locquasi} Let $R$ be a local ring. If $R$ is quasipolar ring, then $R$ is $\delta$-quasipolar.
\end{prop}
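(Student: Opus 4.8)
The plan is to exploit two structural features of a local ring $R$: its only idempotents are $0$ and $1$, and its only maximal right ideal is $J(R)$, which determines $\delta(R)$ outright. First I would pin down $\delta(R)$ using Lemma~\ref{besli}(1), according to which $\delta(R)$ is the intersection of the essential maximal right ideals of $R$. If $R$ is a division ring, then $J(R)=0$ is the unique maximal right ideal and it is not essential, so the relevant family is empty and $\delta(R)=R$; in that case every $a$ is $\delta$-quasipolar with $p=0$, since $a+0\in R=\delta(R)$ and $0\in comm^2(a)$. Otherwise $J(R)\neq 0$, and I would check that $J(R)$ is essential: if a nonzero right ideal $I$ met $J(R)$ trivially, it would contain some $a\notin J(R)$, hence a unit, forcing $I=R$ and $I\cap J(R)=J(R)\neq 0$, a contradiction. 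Thus $\delta(R)=J(R)$ in the non-division case.

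Next I would reduce the choice of spectral idempotent. Since $R$ is local, its only idempotents are $0$ and $1$, and both lie in $comm^2(a)$ for every $a$ (they are central), so the only candidates for a $\delta$-spectral idempotent of $a$ are $p=0$ and $p=1$. Given $a\in R$, locality forces $a\in J(R)$ or $a\in U(R)$. If $a\in J(R)=\delta(R)$, take $p=0$: then $a+p=a\in\delta(R)$ and we are done. (One checks painlessly that such an $a$ is genuinely quasipolar with spectral idempotent $1$, and every unit is quasipolar with spectral idempotent $0$, so for a local ring the quasipolarity hypothesis is in fact automatic and supplies no extra leverage.)

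The decisive, and only remaining, case is $a\in U(R)$, where I would take $p=1$ and must verify $a+1\in\delta(R)=J(R)$; passing to the residue division ring $R/J(R)$, this says exactly $\bar a=-\bar 1$. This unit case is the main obstacle. For a single unit with $\bar a=-\bar 1$ it works, but to cover \emph{every} unit one needs $R/J(R)$ to have no nonzero element other than $-\bar 1$, i.e. $R/J(R)\cong\Bbb Z_2$ (equivalently, the division-ring case $\delta(R)=R$, or residue field $\Bbb Z_2$). I therefore expect the proof to hinge on establishing this residue-field condition, which I would adopt as the operative hypothesis or extract from whatever assumption supplements quasipolarity in the intended statement; it cannot come from quasipolarity alone, as $\Bbb Z_{(p)}$ with $p\geq 3$ (Example~\ref{aksi}) is local and quasipolar yet has $R/J(R)\cong\Bbb Z_p\neq\Bbb Z_2$ and is not $\delta$-quasipolar. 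Finally, Lemma~\ref{inv}(2) offers no shortcut, since for a unit $a$ the element $-1-a$ is again a unit unless already $\bar a=-\bar 1$, so the involution $a\mapsto -1-a$ does not move a ``bad'' unit into $J(R)$.
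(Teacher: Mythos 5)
Your analysis is correct, and the gap you isolate lies not in your argument but in the statement itself. The paper's own proof runs exactly along the lines you anticipate---for $a\in J(R)$ the claim is immediate, and for $a\notin J(R)$ locality makes $a$ invertible---and then concludes with the bare assertion ``Hence $a+1\in\delta(R)$ by $\delta(R)=J(R)$.'' That is precisely the step you show cannot be justified: for a unit $a$, the containment $a+1\in J(R)$ says $\bar a=-\bar 1$ in $R/J(R)$, which holds for \emph{every} unit only when $R/J(R)\cong\Bbb Z_2$. Your observation that $R=\Bbb Z_{(p)}$ with $p\geq 3$ is a local quasipolar ring which the paper itself (Example \ref{aksi}) declares not $\delta$-quasipolar shows the proposition is false as stated; the element $a=1$ already defeats it, since neither $1+0$ nor $1+1=2$ lies in $\delta(\Bbb Z_{(p)})=J(\Bbb Z_{(p)})=p\Bbb Z_{(p)}$. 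Indeed, since every element of a local ring is either a unit or in $J(R)$, every local ring is quasipolar (as you note), so the proposition would assert that every local ring is $\delta$-quasipolar, which is plainly wrong.

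Your preliminary computations are all sound: $\delta(R)=R$ when $R$ is a division ring (the empty intersection of essential maximal right ideals), $\delta(R)=J(R)$ when $J(R)\neq 0$ via essentiality of $J(R)$, and the reduction of the spectral idempotent to $p\in\{0,1\}$. The statement can be salvaged only by adding a hypothesis such as $R/J(R)\cong\Bbb Z_2$ (equivalently $R/\delta(R)\cong\Bbb Z_2$ or $R\cong\Bbb Z_2$), at which point quasipolarity contributes nothing and the conclusion follows from Proposition \ref{temel}. You were right not to manufacture a proof where none exists.
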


\begin{proof}
Let $a\in R$. If $a\in J(R)$, it is clear. Assume that $a\notin
J(R)$. Since $R$ is local, $a$ is invertible. Hence $a+1 \in
\delta(R)$ by $\delta(R)=J(R)$.
\end{proof}

A ring $R$ is called {\it right principally projective} if every
principal right ideal is a direct summand. Principally projective
rings were introduced by Hattori \cite{Hat} to study the torsion
theory and play an important role in ring theory.

\begin{thm}\label{peh} Every $\delta$-quasipolar ring is right principally projective.
\end{thm}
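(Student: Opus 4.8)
The plan is to fix $a\in R$ and produce an idempotent $f\in R$ with $aR=fR$, thereby exhibiting $aR$ as a direct summand of $R_R$. Since $a$ is $\delta$-quasipolar, I choose $p^2=p\in comm^2(a)$ with $a+p=r\in\delta(R)$. Because $a\in comm(a)$ and $p\in comm^2(a)$, we get $ap=pa$, and consequently $p$ commutes with $r=a+p$ as well. This commutation is the structural fact I would exploit throughout.

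First I would pass to the Peirce decomposition $R_R=pR\oplus(1-p)R$. Since $ap=pa$, left multiplication by $a$ carries $pR$ into $pR$ and $(1-p)R$ into $(1-p)R$, so that $aR=apR\oplus a(1-p)R$ with $apR\subseteq pR$ and $a(1-p)R\subseteq(1-p)R$. It therefore suffices to show that $apR$ is a direct summand of $pR$ and that $a(1-p)R$ is a direct summand of $(1-p)R$: if $apR\oplus C_1=pR$ and $a(1-p)R\oplus C_2=(1-p)R$, then $R=aR\oplus(C_1\oplus C_2)$, as desired.

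For the $p$-component I would use that on $pR$ the operator induced by $a+p$ is the one induced by $a$ plus the identity, whence $ap=(r-1)p=-(1-r)p$ and, using $p(1-r)=(1-r)p$, $apR=(1-r)pR$. Here characterization $(5)$ of Lemma \ref{besli} is the key tool: from $r\in\delta(R)=R_5$, taking $y=-1$ produces a semisimple right ideal $Y$ with $(1-r)R\oplus Y=R_R$, so $(1-r)R$ is a direct summand of $R$; the task is to transport this through the commuting idempotent $p$ to conclude that $(1-r)pR$ is a direct summand of $pR$. For the $(1-p)$-component, note that $a(1-p)=r(1-p)\in\delta(R)$, so this is the genuinely $\delta$-radical part, and I would again invoke Lemma \ref{besli}$(5)$ (in its complemented-by-semisimple form) to locate an idempotent generator inside $(1-p)R$.

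The main obstacle I anticipate is exactly the $(1-p)$-component: showing that the element $a(1-p)\in\delta(R)$ generates a direct summand of $(1-p)R$. The characterizations in Lemma \ref{besli} furnish direct-summand complements for right ideals of the form $(1+ry)R$, rather than for $yR$ with $y\in\delta(R)$ directly, so the crux is to reorganize the computation so that the summand property can be read off from one of these $(1+ry)$-type expressions, or else to work in the corner ring $pRp$ together with a compatibility of $\delta$ with corners. I would expect the commutativity $ap=pa$, together with $2\in\delta(R)$ from Lemma \ref{ucin}, to be what makes such a reorganization possible; securing this step is where the real work lies.
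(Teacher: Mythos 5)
You have put your finger on exactly the right place, but the gap you flag at the $(1-p)$-component is not a technical obstacle that a cleverer reorganization will remove: it is fatal, because the statement is false as it stands. Consider $R=\Bbb Z_4$, a commutative local ring with $\delta(R)=J(R)=\{0,2\}$. Every element of $R$ is $\delta$-quasipolar (take $p=0,1,0,1$ for $a=0,1,2,3$ respectively; $comm^2(a)=R$ by commutativity), yet $2R=\{0,2\}$ is not a direct summand of $R_R$, since the only idempotents of $\Bbb Z_4$ are $0$ and $1$. The same failure occurs inside the paper's own Example \ref{ustuc}(1): $T_2(\Bbb Z_2)$ is $\delta$-quasipolar, and for $a=\begin{bmatrix}0&1\\0&0\end{bmatrix}$ the only admissible spectral idempotent is $p=0$, so $aR=a(1-p)R=\{0,a\}$ contains no nonzero idempotent and cannot be a summand. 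In both cases the offending $a$ lies in $\delta(R)$, so your decomposition places all of $aR$ in the $(1-p)$-component; and, as you yourself observe, Lemma \ref{besli}(5) only certifies summands of the form $(1+xy)R$ with $x\in\delta(R)$ --- right ideals complemented by semisimple ideals, hence ``large'' --- whereas the radical part $a(1-p)\in\delta(R)$ generates a $\delta$-small right ideal. No use of $2\in\delta(R)$ or of corner rings can turn the one into the other.

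For comparison, the paper's proof takes a completely different (two-line) route, and that is where its error enters: it asserts ``by Lemma \ref{ucin}, $2\in\delta(R)$ and $-1-a\in\delta(R)$,'' concludes $1-a\in\delta(R)$, and applies Lemma \ref{besli}(5) with $y=-1$ to get $aR\oplus Y=R$. But Lemma \ref{ucin} yields only $2\in\delta(R)$; the claim $-1-a\in\delta(R)$ for every $a$ would force $\delta(R)=R$ (the map $a\mapsto -1-a$ is a bijection of $R$) and fails for $\Bbb Z_4$ and for $T_2(\Bbb Z_2)$. It appears to conflate ``$-1-a$ is $\delta$-quasipolar'' (Lemma \ref{inv}(2)) with ``$-1-a$ lies in $\delta(R)$.'' So your refusal to assume $1-a\in\delta(R)$ and your diagnosis that the genuinely radical component of $a$ is the crux are both correct; the conclusion to draw is not that more work is needed on your step, but that the theorem (and Lemma \ref{strpireg}, which relies on it) requires an additional hypothesis.
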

\begin{proof} Assume that $R$ is a $\delta$-quasipolar ring. Let $a\in R$. By
Lemma \ref{ucin}, $2\in \delta(R)$ and $-1-a\in \delta(R)$. Then
$2 + (-1-a) = 1- a\in \delta(R)$. By \cite[Theorem 1.6 (5)]{Zh},
there exists a semisimple right ideal $Y$ of $R$ such that $R =
(1-(1-a))R\oplus Y = aR\oplus Y$.
\end{proof}

Recall that a ring $R$ is said to be {\it von Neumann regular} if
for any $a\in R$, there exists $b \in R$ with $a = aba$, while $R$
is  {\it strongly regular} if for any $a\in R$, there exists $b
\in R$ with $a = a^2b$. In this direction we prove the following.

\begin{lem}\label{strpireg} Every abelian $\delta$-quasipolar ring is strongly regular.
\end{lem}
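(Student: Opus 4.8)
The plan is to deduce strong regularity from the von Neumann regularity that is already implicit in Theorem~\ref{peh}. First I would note that, by Theorem~\ref{peh}, a $\delta$-quasipolar ring $R$ is right principally projective; in the sense adopted here this means that every principal right ideal $aR$ is a direct summand of $R_R$. This is precisely the classical characterization of von Neumann regularity, and I would make the equivalence explicit: given $a\in R$, write $aR=eR$ with $e^2=e$; from $a\in eR$ we obtain $ea=a$, and from $e\in aR$ we may write $e=as$ for some $s\in R$, so that $a=ea=asa$. Hence each $a$ admits $b\in R$ (namely $b=s$) with $a=aba$, i.e.\ $R$ is von Neumann regular.

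Second, I would invoke the abelian hypothesis to promote this regular equation to the stronger identity $a=a^2b$. Fixing $a$ together with $b$ satisfying $a=aba$, the element $e=ab$ satisfies $e^2=abab=(aba)b=ab=e$, so $e$ is an idempotent; since every idempotent of $R$ is central, $e$ commutes with $a$. Then $ea=aba=a$ while $ae=a^2b$, and equating the two gives $a=a^2b$. As $a$ was arbitrary, every element of $R$ is of the form $a=a^2b$, which is exactly strong regularity.

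I do not anticipate a genuine obstacle, since the statement reduces to Theorem~\ref{peh} combined with the standard fact that an abelian von Neumann regular ring is strongly regular. The only step requiring slight care is extracting the regular equation $a=aba$ from the direct-summand conclusion of Theorem~\ref{peh}; once that is in hand, the centrality of the idempotent $ab$ does all the remaining work, turning $aba=a$ into $a=a^2b$.
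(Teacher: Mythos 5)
Your proposal is correct and follows essentially the same route as the paper: both invoke Theorem~\ref{peh} to see that $aR$ is a direct summand, hence $aR=eR$ for an idempotent $e$, which yields von Neumann regularity, and then pass to strong regularity via the abelian hypothesis. The only difference is cosmetic — you spell out the standard argument that an abelian von Neumann regular ring is strongly regular (centrality of $e=ab$ turning $aba=a$ into $a=a^2b$), whereas the paper simply cites it.
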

\begin{proof} Let $a\in R$. By Theorem \ref{peh}, there exists a semisimple projective right ideal such that
$R = aR\oplus Y$. Then $1 = e + f$ where $e^2 = e\in aR$ and $f^2
= f\in Y$. So $eR\subseteq aR$ and $a = ea + fa$. Hence $a = ea
\in eR$. Thus $aR = eR$. It follows that $R$ is von Neumann
regular. Since $R$ is abelian, $R$ is strongly regular.\end{proof}

The following result is an immediate consequence of Lemma
\ref{strpireg}.

\begin{cor}\label{abdelta} Every abelian $\delta$-quasipolar ring is quasipolar.
\end{cor}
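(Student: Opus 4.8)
The plan is to reduce the statement to the well-known fact that strongly regular rings are quasipolar, using the preceding lemma as the bridge. First I would invoke Lemma \ref{strpireg}: since $R$ is abelian and $\delta$-quasipolar, $R$ is strongly regular. It therefore suffices to show that every strongly regular ring is quasipolar, and for this I would produce the spectral idempotent of an arbitrary $a \in R$ explicitly.

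Fix $a \in R$. In a strongly regular ring each element possesses a commuting generalized inverse, i.e. there is $b \in R$ with $aba = a$, $bab = b$ and $ab = ba$; moreover strongly regular rings are abelian, so every idempotent is central. Set $e = ab = ba$ and take $p = 1 - e$. Since $e$, hence $p$, is central, $p$ commutes with every element of $R$ and in particular with every member of $comm(a)$, so $p \in comm^2(a)$; and clearly $p^2 = p$. This disposes of the idempotency and double-commutant requirements immediately, leaving only the spectral conditions to check.

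It then remains to verify that $a + p$ is invertible and that $ap \in R^{qnil}$. For the quasinilpotent part, the relation $a = a^2 b$ (which follows from $aba = a$ together with $ab = ba$) gives $ap = a(1 - e) = a - a^2 b = 0$, and $0 \in R^{qnil}$ trivially. For invertibility, I would exhibit $b + 1 - e$ as a two-sided inverse of $a + p = a + 1 - e$: a direct multiplication, using the identities $ae = a$ and $eb = b$ extracted from $a = a^2 b$ and $bab = b$, collapses the product to $e + (1 - e) = 1$. Thus $p$ is a spectral idempotent of $a$, and $R$ is quasipolar.

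The only genuinely substantive ingredient is the existence of the commuting generalized inverse and the centrality of idempotents in a strongly regular ring; once these standard facts are in hand, the main (and only mildly fiddly) obstacle is the bookkeeping in the invertibility computation, which is entirely mechanical.
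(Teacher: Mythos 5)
Your proposal is correct and follows the same route as the paper, which states the corollary as an immediate consequence of Lemma \ref{strpireg} (abelian $\delta$-quasipolar $\Rightarrow$ strongly regular) together with the standard fact that strongly regular rings are quasipolar. Your explicit verification via the commuting generalized inverse $b$, the central idempotent $e=ab$, and the spectral idempotent $p=1-e$ simply fills in the detail the paper leaves to the reader, and the computations ($ap=0$ and $(a+1-e)(b+1-e)=1$) check out.
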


 A ring $R$ is said to be {\it clean} \cite{NC} if for each
$a\in R$ there exists $e^2 = e\in R$ such that $a -e$ is
invertible, and $R$ is called  {\it strongly clean} \cite{Ni}
provided that every element of $R$ can be written as the sum of an
idempotent and an invertible element that commute.

\begin{cor}\label{strclean} Every abelian $\delta$-quasipolar ring is strongly clean.
\end{cor}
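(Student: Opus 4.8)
The plan is to build on Lemma~\ref{strpireg}, which already tells us that an abelian $\delta$-quasipolar ring $R$ is strongly regular; it therefore suffices to prove that every such ring is strongly clean. Conveniently, the proof of Lemma~\ref{strpireg} hands us exactly the data needed: for a fixed $a\in R$ it produces an idempotent $e$ with $aR=eR$ and $a=ea=ae$, and since $R$ is abelian this $e$ is central. I would fix this $e$ and work inside the Peirce decomposition $R\cong eRe\times(1-e)R(1-e)$ determined by the central idempotent $e$.

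First I would verify that $a$ is a unit of the corner ring $eRe$. Since $aR=eR$, there is $y\in R$ with $ay=e$; setting $b=eye\in eRe$ one computes $ab=a(eye)=(ay)e=e$, so $a$ has a right inverse $b$ in $eRe$. To promote this to a genuine inverse I would invoke that $R$ is reduced: strong regularity forces idempotents to be central and annihilates nilpotents. A short computation then shows $ba=e$ as well: the element $ba$ is a central idempotent of $eRe$ with $a(ba)=(ab)a=a$, whence $a(e-ba)=0$; centrality of $e-ba$ gives $(e-ba)a=0$, and multiplying on the right by $b$ yields $e-ba=(e-ba)ab=0$. Hence $a$ is invertible in $eRe$, and under the isomorphism above $a$ corresponds to the pair $(a,0)$, a unit in the first coordinate and $0$ in the second.

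Next I would produce the clean decomposition. Set $g=1-e$, a central idempotent, and $u=a-(1-e)=a-g$. In the coordinate $eRe$ the element $u$ is just $a$, a unit, while in the coordinate $(1-e)R(1-e)$ it is $-(1-e)$, the negative of the identity of that corner and hence a unit there; consequently $u$ is a unit of $R$. Since $g$ is central it commutes with $u$, and $g+u=(1-e)+\bigl(a-(1-e)\bigr)=a$. Thus $a=g+u$ expresses $a$ as a sum of an idempotent and a commuting unit, so $R$ is strongly clean.

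The main obstacle is the single verification that $a$ is invertible in the corner $eRe$: this is precisely the point where strong regularity (rather than mere $\delta$-quasipolarity) is essential, since it is what upgrades the semisimple-summand bookkeeping of Lemma~\ref{strpireg} into a two-sided inverse inside the corner. Everything else is formal manipulation with the central idempotent $e$, and the abelian hypothesis (centrality of idempotents) is what simultaneously yields the product decomposition of $R$ and the commutativity $gu=ug$.
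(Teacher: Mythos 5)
Your argument is correct and follows the same route the paper intends: the corollary is stated without proof as an immediate consequence of Lemma~\ref{strpireg}, relying on the standard fact that an abelian (strongly) regular ring is strongly clean. You have simply written out that standard deduction in full --- the unit $b-(1-e)$ inverting $u=a-(1-e)$, with $e$ central --- and every step checks.
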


The converse statement of Corollary \ref{strclean} need not hold
in general.

\begin{ex}{\rm Let $R = \{(q_1, q_2,q_3,\ldots , q_n, a, a, a, \ldots)\mid n\geq 1;
q_i\in\Bbb Q;$ $a\in \Bbb Z_{(2)}\}$.  Then $R$ is strongly clean
but not quasipolar (see \cite[Example 3.4(3)]{YC}). Therefore $R$
is not $J$-quasipolar since every $J$-quasipolar ring is
quasipolar.  On the other hand, since $S_r = 0$ and $\delta(R)/S_r
= J(R)/S_r$, $\delta(R) = J(R)$. Thus   $R$ is not
$\delta$-quasipolar.}
\end{ex}

In \cite[Theorem 2.9]{CC3}, it is shown that if the ring $R$  is
$J$-quasipolar, then $R/J(R)$ is Boolean and idempotents in
$R/J(R)$ lift $R$. We have the following result for
$\delta$-quasipolar rings.

\begin{thm}\label{besli1} If $R$ is a $\delta$-quasipolar ring,
then $R/\delta(R)$ is a Boolean ring and idempotents in
$R/\delta(R)$ lift $R$.
\end{thm}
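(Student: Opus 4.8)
The plan is to work entirely in the quotient ring $R/\delta(R)$, writing $\bar{x}$ for the image of $x\in R$, and to exploit the single arithmetic fact that $2\in\delta(R)$ supplied by Lemma \ref{ucin}. This says $\bar{2}=0$, so $R/\delta(R)$ has characteristic $2$ and $-\bar{x}=\bar{x}$ for every $x$. Since $\delta(R)$ is an ideal of $R$ (as already used in the proof of Lemma \ref{inv}), this quotient is a genuine ring and every computation below is legitimate.

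First I would prove that $R/\delta(R)$ is Boolean by showing that every one of its elements is idempotent. Let $a\in R$ be arbitrary. By hypothesis there is an idempotent $p=p^2\in comm^2(a)$ with $a+p\in\delta(R)$, hence $\bar{a}=-\bar{p}$. Squaring and using $p^2=p$ gives $\bar{a}^2=\bar{p}^2=\bar{p}$. Combining $\bar{a}=-\bar{p}$ with $-\bar{p}=\bar{p}$ (characteristic $2$) yields $\bar{a}=\bar{p}=\bar{a}^2$. Thus every element of $R/\delta(R)$ is idempotent, which is precisely the statement that $R/\delta(R)$ is Boolean, and in particular commutative.

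For the lifting of idempotents, the same $\delta$-spectral idempotent does the work with no extra effort. Given any idempotent $\bar{u}$ of $R/\delta(R)$, choose any preimage $u\in R$ and apply $\delta$-quasipolarity to $u$; this produces a genuine idempotent $p=p^2\in R$ with $u+p\in\delta(R)$, so $\bar{u}=-\bar{p}=\bar{p}$ again by characteristic $2$. Hence $p$ is an honest idempotent of $R$ whose image is $\bar{u}$, which shows that idempotents of $R/\delta(R)$ lift to $R$.

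I expect no serious obstacle: the whole argument hinges on Lemma \ref{ucin}. Without $2\in\delta(R)$ the computation above would give only $\bar{a}^2+\bar{a}=0$ rather than $\bar{a}^2=\bar{a}$, so the characteristic-$2$ reduction is the one indispensable step that upgrades the relation $a+p\in\delta(R)$ into both the Boolean property and the lifting. The only point deserving a word of care is that neither conclusion actually needs the double-commutant condition $p\in comm^2(a)$; both depend solely on $p$ being idempotent together with $a+p\in\delta(R)$, so the definition of $\delta$-quasipolar applies verbatim and nothing further is required.
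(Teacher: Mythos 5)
Your proof is correct, but it routes both conclusions through Lemma \ref{ucin} in a way the paper's own proof deliberately avoids. For Booleanness, the paper applies $\delta$-quasipolarity to $-1+a$ rather than to $a$: this yields $-1+a+p\in\delta(R)$, hence $\bar{a}=\overline{1-p}$ is the image of an idempotent with no appeal to characteristic $2$. For lifting, the paper applies the definition to the idempotent-mod-$\delta(R)$ element $a$ itself and then multiplies $a+p\in\delta(R)$ on the right by $p$ and on the left by $a$, subtracts to get $a^{2}-p\in\delta(R)$, and combines with $a^{2}-a\in\delta(R)$ to conclude $a-p\in\delta(R)$; again $2\in\delta(R)$ is never invoked. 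Your version instead uses $\bar{2}=0$ once to turn $\bar{a}=-\bar{p}$ into $\bar{a}=\bar{p}$, after which both claims fall out of the single identity $\bar{a}=\bar{p}$ with $p^{2}=p$; this is shorter, makes the lifting an immediate corollary of the Boolean part (indeed you show the stronger fact that \emph{every} coset contains an idempotent of $R$), and correctly isolates $2\in\delta(R)$ as the one external input. What the paper's computation buys in exchange is independence from Lemma \ref{ucin}: its argument would survive in any setting where $a+p$ (or $-1+a+p$) lands in the ideal but $2$ need not. Your closing observation that $p\in comm^{2}(a)$ plays no role here is also accurate and matches the paper, whose proof likewise uses only $p^{2}=p$ and membership of $a+p$ in $\delta(R)$.
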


\begin{proof} Let $\overline{a}\in
R/\delta(R)$. There exists $p^2 = p\in comm^2(-1+a)$ such that
$-1+a + p\in \delta(R)$. Hence $\overline{a} = \overline{1-p}$  is
an idempotent in $R/\delta(R)$ and $R/\delta(R)$ is a Boolean
ring. Let $\bar{a}^2 = \bar{a}\in R/\delta(R)$. Then $a^2- a\in
\delta(R)$. There exists $p^2 = p\in comm^2(a)$ such that $a +
p\in \delta(R)$. Multiplying $a + p\in \delta(R)$ from the right
by $p$ we have
\begin{equation} ap + p\in \delta(R) \end{equation}
Multiplying $a + p\in \delta(R)$ from the left by $a$ we have
\begin{equation}a^2 + ap\in
\delta(R) \end{equation}
 Substract (1) from
(2), we have $a^2- p\in \delta(R)$. Hence $a-p\in \delta(R)$ and
 so idempotents in $R/\delta(R)$ lift to $R$. \end{proof}

The concept of $\delta_r$-clean rings are defined in \cite{GO}. A
ring $R$ is called {\it $\delta_r$-clean} if for every element
$a\in R$ there  exists an idempotent $e\in R$ such that $a - e\in
\delta(R)$. A ring is {\it abelian} if all idempotents are
central.

\begin{lem}\label{sait} If $R$ is a $\delta$-quasipolar ring, then it is
$\delta_r$-clean. The converse holds if $R$ is abelian.
\end{lem}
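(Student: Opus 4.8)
The plan is to establish the two implications separately, both by running an arbitrary element through the substitution $a \mapsto -a$ and exploiting that $\delta(R)$ is a two-sided ideal of $R$ (hence closed under negation), a fact I will use in each direction.

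For the forward implication I would start from an arbitrary $a \in R$ and apply $\delta$-quasipolarity not to $a$ but to $-a$. Since $comm(-a) = comm(a)$, we also have $comm^2(-a) = comm^2(a)$, so there is an idempotent $p \in comm^2(a)$ with $-a + p \in \delta(R)$. Negating and using that $\delta(R)$ is an ideal yields $a - p \in \delta(R)$, so setting $e = p$ exhibits an idempotent of $R$ with $a - e \in \delta(R)$, which is exactly the $\delta_r$-clean condition. No commutativity hypothesis is needed here, because $\delta_r$-cleanness asks only for an idempotent of $R$ and simply discards the $comm^2$ membership that $\delta$-quasipolarity already supplies.

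For the converse, assume $R$ is abelian and $\delta_r$-clean and fix $a \in R$. Applying $\delta_r$-cleanness to $-a$ produces an idempotent $e \in R$ with $-a - e \in \delta(R)$, and negating gives $a + e \in \delta(R)$. The only gap between this and $\delta$-quasipolarity is the requirement that the idempotent lie in $comm^2(a)$, and this is precisely where abelianness enters: since every idempotent of $R$ is central, $e$ commutes with every element of $R$, in particular with every member of $comm(a)$, so $e \in comm^2(a)$. Hence $e^2 = e \in comm^2(a)$ with $a + e \in \delta(R)$, and $a$ is $\delta$-quasipolar.

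I expect no serious obstacle, since the two notions differ only by the sign carried on the idempotent and by the $comm^2$ condition. The single point deserving explicit care is the converse, where one must observe that a central idempotent automatically satisfies the double-commutant membership; I would spell out that centrality gives $ey = ye$ for all $y \in R$, and in particular for all $y \in comm(a)$, so the defining condition of $comm^2(a)$ is met with no further computation.
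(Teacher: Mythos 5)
Your proof is correct and follows essentially the same approach as the paper's: both directions are handled by a trivial reparametrization of the element to absorb the sign discrepancy between the two definitions, with abelianness supplying $e\in comm^2(a)$ in the converse. The only (cosmetic) difference is that you substitute $a\mapsto -a$ and negate inside the ideal $\delta(R)$, whereas the paper substitutes $a\mapsto a-1$ (resp.\ $a\mapsto a+1$) and passes to the complementary idempotent $1-p$; both manipulations are equally valid.
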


\begin{proof}
 Let $R$ be a $\delta$-quasipolar ring and $a\in R$. There
exists $p^2 = p\in comm^2(-1+a)$ such that $-1+a + p\in
\delta(R)$. Then $a- (1- p)\in \delta(R)$. For the converse,
assume that $R$ is abelian. Let $a\in R$. There exists an
idempotent $e$ such that $1 + a- e\in \delta(R)$. By assumption,
$1-e$ is a central idempotent and so $1-e\in comm^2(a)$.
\end{proof}

 Recall that a ring $R$ is {\it exchange} if for every $a\in R$, there exists an idempotent $e\in aR$ such that
$1- e \in (1- a)R$. Namely, von Neumann regular rings and clean
rings are exchange.
\begin{cor}  Let $R$ be a $\delta$-quasipolar ring. Then
\begin{enumerate}\item[(1)] $R$ is an exchange ring.\item[(2)]
$R/\delta(R)$ is a clean ring.
\end{enumerate}
\end{cor}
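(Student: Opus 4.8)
The plan is to prove each statement by leveraging the results already established for $\delta$-quasipolar rings, together with the characterization of $\delta(R)$ from Lemma \ref{besli} and the Boolean-quotient result in Theorem \ref{besli1}. For part (1), I want to verify the exchange property directly from its definition. For part (2), I want to use the fact that clean rings can be detected by lifting idempotents and controlling the quotient modulo a radical-like ideal.

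**For part (1),** I would argue as follows. Let $a\in R$. By Lemma \ref{sait}, $R$ is $\delta_r$-clean, so there exists an idempotent $e$ with $a-e\in\delta(R)$. The strategy is to show $e\in aR$ and $1-e\in(1-a)R$ after possibly adjusting $e$ via the $\delta$-small ideal structure. The key leverage is part (3) of Lemma \ref{besli}: elements of $\delta(R)$ have the property that $xR+K=R$ forces $K$ to be a direct summand, and part (5) gives the existence of a complementary semisimple ideal. Concretely, since $a-e\in\delta(R)$, I would write $e=a-(a-e)$ and use that $\delta(R)$ behaves like a small ideal, so that $e$ and $a$ generate comparable right ideals modulo a direct summand. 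An alternative, cleaner route is to invoke the standard theorem that a ring is exchange if and only if idempotents lift modulo every left (equivalently right) ideal; combined with Theorem \ref{besli1}, which already tells us that $R/\delta(R)$ is Boolean (hence exchange, even von Neumann regular) and that idempotents lift from $R/\delta(R)$ to $R$. Since exchange rings are characterized by idempotents lifting modulo $\delta(R)$ together with $R/\delta(R)$ being exchange, this gives the claim.

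**For part (2),** the goal is that $R/\delta(R)$ is clean. But Theorem \ref{besli1} already establishes that $R/\delta(R)$ is a Boolean ring. Every Boolean ring is clean: given $\bar a\in R/\delta(R)$ with $\bar a^2=\bar a$, write $\bar a=\bar a+\bar 0$ where one checks that the decomposition into an idempotent and a unit is immediate because in a Boolean ring $2\bar a=0$, so $\bar a-\bar 1=-(\bar 1-\bar a)$ is a unit (its square is $\bar 1$) and $\bar 1$ is idempotent, giving $\bar a=\bar 1+(\bar a-\bar 1)$ as an idempotent plus a unit. Thus part (2) follows immediately from Theorem \ref{besli1} with essentially no extra work.

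**The main obstacle** I anticipate is part (1): making the exchange property precise requires either citing the classical equivalence ``$R$ is exchange $\iff$ idempotents lift modulo $\delta(R)$ and $R/\delta(R)$ is exchange,'' or else producing the idempotent $e\in aR$ with $1-e\in(1-a)R$ by hand from the $\delta_r$-clean decomposition. The second approach needs care because $\delta(R)$ is not the Jacobson radical, so I cannot freely invoke the usual lifting-modulo-$J(R)$ machinery; instead I must use the direct-summand characterization in Lemma \ref{besli}(3) and (5) to control how $\delta$-small perturbations interact with idempotents. The cleanest write-up is to cite the known characterization of exchange rings in terms of $\delta$-clean behavior (this is precisely why $\delta_r$-clean rings were introduced in \cite{GO}), reducing part (1) to Lemma \ref{sait} plus a standard result that $\delta_r$-clean rings are exchange.
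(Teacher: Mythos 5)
Your final write-up coincides with the paper's proof: part (1) is Lemma \ref{sait} plus the cited fact from \cite{GO} that every $\delta_r$-clean ring is exchange, and part (2) is Theorem \ref{besli1} plus the observation that Boolean rings are clean. One caution about your alternative route for (1): the equivalence ``$R$ is exchange iff $R/I$ is exchange and idempotents lift modulo $I$'' is a theorem only for ideals $I\subseteq J(R)$, and $\delta(R)$ need not be contained in $J(R)$ (for a general ideal the claim fails, e.g.\ $\Bbb Z$ with $I=2\Bbb Z$), so you should rely on the citation route, as you ultimately do.
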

\begin{proof}(1) Let $R$ be a $\delta$-quasipolar ring. By Lemma
\ref{sait}, $R$ is a $\delta_r$-clean ring. By \cite[Theorem
2.2(2)]{GO}, every $\delta_r$-clean ring is an exchange ring.\\(2)
By Theorem \ref{besli1}, $R/\delta(R)$ is Boolean, therefore, it
is clean.
\end{proof}
\begin{cor} Consider following conditions for a ring $R$.\begin{enumerate}
\item $R$ is $\delta$-quasipolar and $\delta(R) = 0$.\item $R$ is Boolean.\item $R$ is von Neumann regular and
$\delta$-quasipolar.\end{enumerate} Then \rm {(1)} $\Rightarrow$
\rm{(2)} $\Rightarrow$ \rm{(3)}.
\end{cor}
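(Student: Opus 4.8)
The plan is to treat the two implications separately, since each follows quickly from results already in hand, and to record the (mild) point where some care is needed.

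For $(1) \Rightarrow (2)$ I would invoke Theorem \ref{besli1}, which guarantees that $R/\delta(R)$ is a Boolean ring whenever $R$ is $\delta$-quasipolar. Under the standing hypothesis $\delta(R) = 0$, the canonical surjection $R \to R/\delta(R)$ has trivial kernel, hence is an isomorphism, so $R \cong R/\delta(R)$ is itself Boolean. This step is essentially immediate and poses no obstacle.

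For $(2) \Rightarrow (3)$ I would first recall the classical facts that every Boolean ring is commutative and that each of its elements is idempotent. Von Neumann regularity is then transparent: for $a \in R$ one has $a = a^3 = a\cdot a\cdot a$, so $a = aba$ holds with $b = a$. For $\delta$-quasipolarity I would propose the candidate $\delta$-spectral idempotent $p = a$. Indeed $p^2 = a^2 = a = p$; commutativity gives $comm(a) = R$ and hence $comm^2(a) = R$, so trivially $p \in comm^2(a)$; and since $2x = 0$ for every $x$ in a Boolean ring, we get $a + p = 2a = 0 \in \delta(R)$ because $\delta(R)$ is an ideal. Thus every element of $R$ is $\delta$-quasipolar, and $R$ is also von Neumann regular, which is exactly $(3)$.

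The only point requiring attention is the verification that $p = a$ lies in $comm^2(a)$, which is precisely where I use that Boolean rings are commutative; once that standard fact is granted, the remaining checks are purely computational. As an alternative one could simply cite Example \ref{ilk}(1), where the $\delta$-quasipolarity of Boolean rings is already recorded, and then supply only the short regularity argument. I do not anticipate any genuine difficulty here: the substance of the corollary is that the hypotheses loosen as one passes from $(1)$ to $(3)$, and both links are forced by the preceding theorem together with elementary properties of Boolean rings.
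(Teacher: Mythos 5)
Your proposal is correct and follows essentially the same route as the paper: $(1)\Rightarrow(2)$ is the direct application of Theorem \ref{besli1} with $\delta(R)=0$, and $(2)\Rightarrow(3)$ uses the same observations that in a Boolean ring $a+a=0\in\delta(R)$ (so $p=a$ serves as the $\delta$-spectral idempotent) and $a=a^3$ gives von Neumann regularity. No issues.
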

\begin{proof} (1) $\Rightarrow$ (2) Assume that $R$ is $\delta$-quasipolar and $\delta(R) =
0$. By Theorem \ref{besli1}, $R$ is Boolean.\\ (2) $\Rightarrow$
(3) Assume that $R$ is Boolean. Then it is commutative with
characteristic $2$ and $a^2 + a = 0\in \delta(R)$ and $a^2 = a =
a^3$ for all $a\in R$. Hence $R$ is von Neumann regular and
$\delta$-quasipolar.
\end{proof}

Let $R$ be a ring. The element $a\in R$ is called {\it $J$-clean}
if $a$ is the sum of an idempotent and a radical element in its
Jacobson radical. The ring $R$ is called {\it $J$-clean} if every
element is a sum of an idempotent and a radical element.

\begin{thm}\label{mete}
If $R$ is an abelian $J$-clean ring, then it is
$\delta$-quasipolar.
\end{thm}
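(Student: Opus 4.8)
The plan is to route through the notion of $\delta_r$-clean rings and then invoke Lemma \ref{sait}. The first observation is that every $J$-clean ring is automatically $\delta_r$-clean: given $a\in R$, the $J$-clean hypothesis furnishes an idempotent $e$ and an element $j\in J(R)$ with $a = e + j$, whence $a - e = j\in J(R)\subseteq\delta(R)$. Since the containment $J(R)\subseteq\delta(R)$ always holds, this exhibits the required $\delta_r$-clean decomposition for each $a$, so $R$ is $\delta_r$-clean without any further hypothesis.

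Having recorded that $R$ is $\delta_r$-clean, the second step is to feed this into the converse direction of Lemma \ref{sait}, which asserts that an abelian $\delta_r$-clean ring is $\delta$-quasipolar. Concretely, for $a\in R$ I would apply the $J$-clean decomposition not to $a$ itself but to $1 + a$, writing $1 + a = e + j$ with $e^2 = e$ and $j\in J(R)$; then setting $p = 1 - e$ gives $a + p = (1+a) - e = j\in J(R)\subseteq\delta(R)$. Because $R$ is abelian, the idempotent $p = 1 - e$ is central and hence lies in $comm^2(a)$, so $p$ serves as a $\delta$-spectral idempotent for $a$ and $a$ is $\delta$-quasipolar.

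The only point that actually uses the abelian hypothesis is the membership $p\in comm^2(a)$: without centrality of idempotents there is no reason for $1 - e$ to commute with every element of $comm(a)$. This is precisely the role abelianness plays, and it is the same mechanism already exploited in the converse of Lemma \ref{sait}. I therefore expect no genuine obstacle: the argument is a short two-step reduction, the one mildly non-obvious manoeuvre being the shift from $a$ to $1 + a$, which is exactly what converts an idempotent summand into a spectral idempotent carrying the correct sign in the relation $a + p\in\delta(R)$.
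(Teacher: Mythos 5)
Your proof is correct and is essentially the same as the paper's: the paper applies the $J$-clean decomposition to $-a$, obtaining $-a=e+j$ and hence $a+e=-j\in J(R)\subseteq\delta(R)$ with $e\in comm^2(a)$ by abelianness, while you apply it to $1+a$ and take $p=1-e$ (equivalently, you factor the argument through the converse direction of Lemma \ref{sait}, whose proof is the identical computation). The shift from $-a$ to $1+a$ is immaterial, and your identification of where the abelian hypothesis is used matches the paper exactly; there are no gaps.
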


\begin{proof}
Let $a\in R$. Then we have $-a \in R$. Since $R$ is $J$-clean,
there exist $e^2=e \in R$ and $j\in J(R)$ such that $-a = e + j$.
Hence $a+e \in J(R)$. Since $R$ is abelian, $e^2=e \in comm^2(a)$
and $J(R)\subseteq \delta(R)$, $R$ is  $\delta$-quasipolar as
asserted.
\end{proof}

All $\delta$-quasipolar rings  need not be Boolean and the
converse statement of Theorem \ref{mete} is not true in general.

\begin{ex}{\rm The ring $\mathbb{Z}_3$ is
semisimple and so $\mathbb{Z}_3 = \delta(\mathbb{Z}_3)$. Therefore
$\mathbb{Z}_3$ is $\delta$-quasipolar, but it is neither Boolean
nor $J$-clean.}
\end{ex}

In \cite[Proposition 2.11]{CC3}, it is shown that a ring $R$ is
local and $J$-quasipolar if and only if $R$ is $J$-quasipolar with
only trivial idempotents if and only if $R/J(R)\cong \Bbb Z_2$.
 We have the following  for
$\delta$-quasipolar rings.

\begin{prop}\label{temel}
Let $R$ be a ring with only trivial idempotents. Then $R$ is
$\delta$-quasipolar if and only if $R\cong \Bbb Z_2$ or
$R/\delta(R)\cong \Bbb Z_2$.
\end{prop}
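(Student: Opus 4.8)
The plan is to prove both implications, with the main engine being Theorem~\ref{besli1}, which guarantees that $R/\delta(R)$ is Boolean and that idempotents of $R/\delta(R)$ lift to $R$, combined with the hypothesis that the only idempotents of $R$ are $0$ and $1$.

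For the forward direction, suppose $R$ is $\delta$-quasipolar. First I would show that $R/\delta(R)$ has only trivial idempotents: every element $\overline{x}$ of $R/\delta(R)$ is idempotent because the quotient is Boolean, and by the lifting property each such $\overline{x}$ is the image of an idempotent $e\in R$; since $e\in\{0,1\}$, we get $\overline{x}\in\{\overline{0},\overline{1}\}$. Hence $R/\delta(R)=\{\overline{0},\overline{1}\}$ has at most two elements, and this forces a dichotomy. Either $\overline{0}\neq\overline{1}$, in which case $R/\delta(R)$ is the unique two-element ring with identity and so $R/\delta(R)\cong\Bbb Z_2$ (the second alternative); or $\overline{0}=\overline{1}$, that is, $\delta(R)=R$.

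The case $\delta(R)=R$ is where the real work lies, and I expect it to be the main obstacle. Here $1\in\delta(R)$, so by the description $\delta(R)=R_5$ in Lemma~\ref{besli}(5), applied with $x=1$ and $y=-1$, there is a semisimple right ideal $Y$ with $0\oplus Y=R$; thus $Y=R$ is semisimple and $R$ is a semisimple ring. An application of Artin--Wedderburn then writes $R$ as a finite product of matrix rings over division rings, and the absence of nontrivial idempotents eliminates all but one factor and forces that factor to be $1\times 1$, so that $R$ is a division ring. Pinning this division ring down to $\Bbb Z_2$ (the first alternative) is the delicate point: one must bring in the structural constraints coming from $\delta$-quasipolarity, in particular $2\in\delta(R)$ from Lemma~\ref{ucin}, and this is the step I would scrutinize most carefully.

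For the converse, both alternatives are easy. If $R\cong\Bbb Z_2$, then $R$ is Boolean and hence $\delta$-quasipolar by Example~\ref{ilk}(1). If instead $R/\delta(R)\cong\Bbb Z_2$, take any $a\in R$; its image is $\overline{0}$ or $\overline{1}$. In the first case $a\in\delta(R)$ and $p=0$ works, since $0\in comm^2(a)$ and $a+0\in\delta(R)$. In the second case $a-1\in\delta(R)$, and since $2\in\delta(R)$ (because $\overline{2}=\overline{0}$ in $\Bbb Z_2$) we obtain $a+1=(a-1)+2\in\delta(R)$; as the identity $1$ always lies in $comm^2(a)$, the idempotent $p=1$ exhibits $a$ as $\delta$-quasipolar. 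Thus every element is $\delta$-quasipolar, which completes the converse.
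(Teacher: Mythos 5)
Your converse direction is correct and is essentially the paper's own argument. In the forward direction your reduction is also sound up to the point you flag: via Theorem~\ref{besli1} and lifting you get $R/\delta(R)=\{\overline{0},\overline{1}\}$, and the only problematic branch is $\delta(R)=R$, where your appeal to Lemma~\ref{besli}(5) correctly shows $R$ is semisimple and hence, having only trivial idempotents, a division ring. But the step you say you would ``scrutinize most carefully'' is a genuine, unclosable gap: nothing can pin that division ring down to $\Bbb Z_2$. Every division ring $D$ is semisimple, so $\delta(D)=D$ and $p=0$ witnesses $\delta$-quasipolarity of every element; thus $\Bbb Q$ and $\Bbb Z_3$ --- both of which the paper itself exhibits as $\delta$-quasipolar rings with only trivial idempotents (Example~\ref{ilk}(2) and the example following Theorem~\ref{mete}) --- satisfy the left-hand side of the proposition while $D\not\cong\Bbb Z_2$ and $D/\delta(D)=0\not\cong\Bbb Z_2$. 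Lemma~\ref{ucin} cannot rescue you here, since $2\in\delta(R)=R$ is vacuous in this branch.

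So the obstruction is not in your argument but in the statement, which fails exactly in the case you isolated. The paper's proof does not resolve it either: it splits into $\delta(R)=0$ (forcing $R\cong\Bbb Z_2$) and $\delta(R)\neq 0$, and in the second case picks $a\in R\setminus\delta(R)$ and concludes $\overline{a}=\overline{1}$, tacitly assuming $\delta(R)$ is proper so that $R/\delta(R)$ has two distinct elements; when $\delta(R)=R$ that set is empty and $R/\delta(R)$ is the zero ring. A correct version of the proposition needs a third alternative (for instance ``or $\delta(R)=R$, in which case $R$ is a division ring''), and with that amendment your argument, as written, proves it.
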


\begin{proof}
Assume that $R$ is $\delta$-quasipolar. Let $a\in R$. There exists
an idempotent $p\in comm^2(a)$ such that $-a+p\in \delta(R)$. By
hypothesis $p = 1$ or $p = 0$. If $\delta(R) = 0$, then $R\cong
\Bbb Z_2$. Suppose that $\delta(R)\neq 0$. For any $a\in
R\setminus\delta(R)$, $\bar{a} = \bar{1}\in R/\delta(R)$. Hence
$R/\delta(R)\cong \Bbb Z_2$. Conversely, if $R\cong \Bbb Z_2$,
there is nothing to do. Suppose that $R/\delta(R)$ is isomorphic
to $\Bbb Z_2$ by isomorphism $f$. Let $a\in R\setminus \delta(R)$.
Then $f(-\overline{a}) = \overline{1}\in\Bbb Z_2$. Then
$f(-\overline{a}) = f(\overline{1})$ implies $-\overline{a}-
\overline{1}\in$ Ker$f = 0$. Hence $-\overline{a} = \overline{1}$.
That is, $a + 1\in\delta(R)$. Thus $R$ is $\delta$-quasipolar.
\end{proof}

Recall that a ring $R$ is called {\it strongly $\pi$-regular} if
for every element $a$ of $R$ there exist a positive integer $n$
(depending on $a$) and an element $x$ of $R$ such that $a^n =
a^{n+1}x$, equivalently, an element $y$ of $R$ such that $a^n =
ya^{n+1}$. In spite of the fact that $J(R)$ is contained in both
$\delta(R)$ and $R^{qnil}$, no comparings between $\delta(R)$ and
$R^{qnil}$ exist.  Strongly $\pi$-regular rings play crucial role
in this direction.

\begin{prop}\label{kiyas} Let $R$ be a $\delta$-quasipolar ring and $\delta(R)=J(R)$.
Then $R$ is strongly $\pi$-regular if and only if $J(R)=R^{qnil}=
nil(R)=\delta(R)$.
\end{prop}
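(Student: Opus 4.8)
The plan is to prove each direction as a cyclic chain of inclusions, leaning throughout on the fact (Theorem \ref{besli1}) that $R/\delta(R)=R/J(R)$ is a Boolean ring, hence reduced, and on the hypothesis $\delta(R)=J(R)$.

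For the forward implication I would assume $R$ is strongly $\pi$-regular and establish the chain $J(R)\subseteq R^{qnil}\subseteq nil(R)\subseteq J(R)$. The first inclusion is automatic: if $a\in J(R)$ and $x\in comm(a)$, then $ax\in J(R)$, so $1+ax\in U(R)$ and $a\in R^{qnil}$. The last inclusion follows because a Boolean ring is reduced, so any nilpotent element of $R$ maps to $0$ in $R/J(R)$, giving $nil(R)\subseteq J(R)$. The middle inclusion is the crux: given $a\in R^{qnil}$, strong $\pi$-regularity provides, by the classical theorem of Azumaya which lets the witness be taken in the commutative subring generated by $a$, an element $x\in comm(a)$ and an $n$ with $a^n=a^{n+1}x$, whence $a^n(1-ax)=0$; since $-x\in comm(a)$ and $a\in R^{qnil}$, the element $1-ax=1+a(-x)$ is a unit, forcing $a^n=0$ and $a\in nil(R)$. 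The three sets then coincide, and all equal $\delta(R)$ by hypothesis.

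For the converse I would assume $J(R)=R^{qnil}=nil(R)=\delta(R)$, so in particular $J(R)$ is a nil ideal. Fix $a\in R$. Since $R/J(R)$ is Boolean, $a-a^2\in J(R)=nil(R)$, say $(a-a^2)^m=0$. I would then split the binomial expansion of $1=(a+(1-a))^{2m-1}$ at exponent $m$, setting $e$ equal to the sum of the terms whose power of $a$ is at least $m$; then $e$ is a polynomial in $a$, so $e\in comm^2(a)$, and since every term of $e$ is a multiple of $a^m$ while every term of $1-e$ is a multiple of $(1-a)^m$, the product $e(1-e)$ is divisible by $a^m(1-a)^m=(a-a^2)^m=0$, so $e^2=e$. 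Writing $e=a^m g(a)$ and $1-e=(1-a)^m h(a)$, the relation $a^m(1-e)=a^m(1-a)^m h(a)=0$ gives $a^m=a^m e=a^{2m}g(a)=a^{m+1}\bigl(a^{m-1}g(a)\bigr)$, which exhibits $a$ as strongly $\pi$-regular with witness $x=a^{m-1}g(a)\in comm(a)$.

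The main obstacle is the middle inclusion $R^{qnil}\subseteq nil(R)$ in the forward direction, where mere quasinilpotence must be upgraded to genuine nilpotence; this hinges essentially on being able to choose the witness $x$ of strong $\pi$-regularity to commute with $a$, so that membership in $R^{qnil}$ can be applied to $1-ax$. Once that link is in place, the reducedness of $R/J(R)$ and the polynomial-idempotent construction in the converse are routine.
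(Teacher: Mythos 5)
Your proof is correct. The forward direction is essentially the paper's argument: the crucial step $R^{qnil}\subseteq nil(R)$ is handled identically (take a commuting witness $x$ with $a^n=a^{n+1}x$, note $a^n(1-ax)=0$, and use quasinilpotence to invert $1-ax$; the paper cites Lam's Exercise 23.6(1) for exactly the Azumaya-type commuting-witness fact you invoke). For $nil(R)\subseteq J(R)$ you argue via reducedness of the Boolean quotient $R/\delta(R)$, whereas the paper instead notes that $1-a$ is invertible, so its $\delta$-spectral idempotent is $1$ by Lemma \ref{inv}(3), giving $2-a\in\delta(R)$ and then $a\in\delta(R)$ since $2\in\delta(R)$; your route is a bit more direct and equally valid. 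The converse is where you genuinely diverge: the paper takes the $\delta$-spectral idempotent $p$ of $-1+a$, observes $u=-1+a+p$ is nilpotent so $a+p=1+u$ is a unit and $ap=up$ is nilpotent, and then appeals to Nicholson's Proposition 1 (Fitting's lemma characterization) to conclude strong $\pi$-regularity. You instead use only that $a-a^2$ is nilpotent (Boolean quotient plus $J(R)=nil(R)$) and run the classical binomial idempotent-lifting construction to produce the witness explicitly. Your version is more self-contained and does not use the double-commutant property of the spectral idempotent at all, at the cost of not exhibiting the link between the $\delta$-spectral idempotent and the strongly $\pi$-regular decomposition that the paper's proof makes visible.
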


\begin{proof} Necessity. Let $a\in R^{qnil}$. Then for any $x\in comm(a)$, $1-
ax$ is invertible. By hypothesis, there exist a positive integer
$m$ and $ b\in R$ such that $a^m = a^{m+1}b$. Since $b\in comm(a)$
by \cite[Page 347, Exercise 23.6(1)]{La}, $a^m = 0$. So $a\in
nil(R)$ or $R^{qnil} \subseteq nil(R)$. To prove $nil(R)\subseteq
\delta(R)$, let $a\in nil(R)$. By hypothesis there exists $p^2 =
p\in comm^2(1-a)$ such that $1-a+p\in \delta(R)$. Since $1- a$ is
invertible, $p = 1$ by Lemma \ref{inv} (3). Hence $2- a\in
\delta(R)$. Also $2\in \delta(R)$ by Lemma \ref{ucin}, we then
have $a\in \delta(R)$.

Sufficiency.  Let $a\in R$. There exists $p^2 = p\in comm^2(-1 +
a)$ such that $-1 + a + p\in \delta(R)$. Set $u =-1 + a + p\in
nil(R)$. Then $a + p$ is invertible and $ap = up$ is nilpotent so
that $a^np = 0$ for some positive integer $n$. So $a^n = a^n(1- p)
= (u + (1- p))^n(1- p) = (u + 1)^n(1- p) = (a + p)^n(1- p) = (1-
p)(a + p)^n$. By Nicholson, \cite[Proposition 1]{Ni}, $a$ is
strongly $\pi$-regular. This completes the proof.
\end{proof}

Let $R$ and $V$ be rings and $V$ be an $(R, R)$-bimodule that is
also a ring with $(vw)r = v(wr)$, $(vr)w = v(rw)$, and $(rv)w =
r(vw)$ for all $v$, $w\in V$ and $r\in R$. The \textit{Dorroh
extension} $D(R, V)$ of $R$ by $V$ defined as the ring consisting
of the additive abelian group $R\oplus V$ with multiplication $(r,
v)(s, w) = (rs, rw + vs + vw)$ where $r$, $s\in R$ and $v$, $w\in
V$.

Uniquely clean rings were introduced by Chen in \cite{Chen}. A
ring $R$ is {\it uniquely clean} in case for any $a \in R$ there
exists a unique idempotent $e \in R$ such that $a - e \in R$ is
invertible. In \cite{GO}, among others, uniquely $\delta_r$-clean
rings are studied. A ring $R$ is called {\it uniquely
$\delta_r$-clean} if for every element $a\in R$ there exists a
unique idempotent $e\in R$ such that $a- e\in \delta(R)$. Uniquely
clean Dorroh extensions in \cite[Proposition 7]{NZ} and uniquely
$\delta_r$-clean Dorroh extensions in \cite[Proposition 3.11]{GO}
are considered. Now we consider $\delta$-quasipolar Dorroh
extensions.

\begin{prop}\label{dor} Let $R$ be a ring. Then we have the
following.
\begin{itemize}\item[{\rm (1)}] If $D(R, V)$ is $\delta$-quasipolar,
then $R$ is $\delta$-quasipolar. \item[{\rm (2)}] If the following
conditions are satisfied, then $D(R, V)$ is $\delta$-quasipolar.
\begin{itemize}\item[{\rm(i)}] $R$ is $\delta$-quasipolar;
\item[{\rm(ii)}] $e^2 = e\in R$, then $ev = ve$ for all $v\in V$;
\item[{\rm(iii)}] $V = \delta(V)$.\end{itemize}
\end{itemize}
\end{prop}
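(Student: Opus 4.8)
The plan is to work through the split surjection $\pi\colon D(R,V)\to R$, $(r,v)\mapsto r$, which is a ring homomorphism with section $r\mapsto(r,0)$, together with the ideal $N=\{0\}\oplus V$ of $D(R,V)$, for which $D(R,V)/N\cong R$ and $N\cong V$ as rings. I would first record a general fact: any surjective ring homomorphism $f\colon S\to T$ satisfies $f(\delta(S))\subseteq\delta(T)$. Using part (4) of Lemma \ref{besli}, each ideal $Q$ of $T$ with $T/Q$ possessing a faithful singular simple module pulls back to an ideal $f^{-1}(Q)$ with $S/f^{-1}(Q)\cong T/Q$, so $\delta(S)\subseteq f^{-1}(Q)$ and hence $f(\delta(S))\subseteq f(f^{-1}(Q))=Q$; intersecting over all such $Q$ gives the claim. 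The same characterization also yields the quotient rule $\delta(S/I)=\delta(S)/I$ whenever $I\subseteq\delta(S)$, since then every ideal occurring in the intersection for $\delta(S)$ already contains $I$.

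For (1), given $a\in R$ I would apply $\delta$-quasipolarity of $D(R,V)$ to $(a,0)$ to get an idempotent $(p,q)\in comm^2((a,0))$ with $(a,0)+(p,q)\in\delta(D(R,V))$. Then $p=\pi((p,q))$ is an idempotent of $R$, and since $(x,0)\in comm((a,0))$ for every $x\in comm(a)$, feeding these into $(p,q)\in comm^2((a,0))$ forces $px=xp$, so $p\in comm^2(a)$. Finally $a+p=\pi\big((a,0)+(p,q)\big)\in\pi(\delta(D(R,V)))\subseteq\delta(R)$ by the preliminary fact, so $p$ is a $\delta$-spectral idempotent for $a$ and $R$ is $\delta$-quasipolar.

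For (2), the crux is the identification $\delta(D(R,V))=\{(s,w):s\in\delta(R),\ w\in V\}$. I would first prove $N=\{0\}\oplus V\subseteq\delta(D(R,V))$ from (iii): if an ideal $P$ with $D(R,V)/P$ carrying a faithful singular simple module $M$ did not contain $N$, then $(N+P)/P$ would be a nonzero ideal of the primitive ring $D(R,V)/P$, over which $M$ is again faithful and simple; but $(N+P)/P\cong N/(N\cap P)$ is a ring quotient $V/Q$ of $V$, so $V/Q$ would carry a faithful singular simple module (the singularity being the delicate point discussed below), contradicting $V=\delta(V)$ through part (4) of Lemma \ref{besli}. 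With $N\subseteq\delta(D(R,V))$ in hand, the quotient rule gives $\delta(D(R,V))/N=\delta(D(R,V)/N)=\delta(R)$, that is $\delta(D(R,V))=\delta(R)\oplus V$. Now, given $(a,v)$, pick a $\delta$-spectral idempotent $p$ of $a$ in $R$ and set $(p,0)$, which is idempotent. For $(x,y)\in comm((a,v))$ the first coordinate gives $x\in comm(a)$, so $px=xp$, while (ii) gives $py=yp$; hence $(p,0)$ commutes with $(x,y)$ and $(p,0)\in comm^2((a,v))$. Since $a+p\in\delta(R)$, the element $(a,v)+(p,0)=(a+p,v)$ lies in $\delta(R)\oplus V=\delta(D(R,V))$, so $(a,v)$ is $\delta$-quasipolar.

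The main obstacle is the inclusion $\{0\}\oplus V\subseteq\delta(D(R,V))$, and inside it the transfer of the \emph{singularity} of the faithful simple module from $D(R,V)/P$ to the induced quotient $V/Q$ of $V$. Singularity is a ring-relative notion and $V$ may be non-unital, so one must check that the essential annihilators witnessing singularity over the primitive quotient remain essential when the module is viewed over the nonzero ideal $(N+P)/P\cong V/Q$; this restriction-of-essentiality step, rather than any of the idempotent or double-commutant bookkeeping, is where the real work sits. If one prefers, this inclusion can instead be quoted from the computation of $\delta$ for Dorroh extensions in \cite{GO}.
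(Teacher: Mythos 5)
Part (1) of your proposal is correct and follows the paper's own route: restrict to $(a,0)$, extract the first component $p$ of the idempotent, check $p\in comm^2(a)$ by feeding in elements $(x,0)$, and push $(a,0)+(p,q)$ forward along the projection using $f(\delta(S))\subseteq\delta(T)$ (the paper instead gets $r+p\in\delta(R)$ by citing \cite[Proposition 3.11]{GO}, but the surjection fact you prove is the same one the paper quotes from \cite{Zh} in Proposition \ref{don}, and your derivation of it from Lemma \ref{besli}(4) is sound, since preimages of essential right ideals under surjections are essential). In part (2) your verification that $(p,0)\in comm^2((a,v))$, using (ii) to get $py=yp$, is correct and is in fact a step the paper's proof leaves implicit.

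The genuine gap is in your computation of $\delta(D(R,V))$. The ``quotient rule'' $\delta(S/I)=\delta(S)/I$ for $I\subseteq\delta(S)$ is false, and the paper itself contains a counterexample: for $S=T_2(F)$ one has $\delta(S)=\begin{bmatrix}0&F\\0&F\end{bmatrix}$, so $S/\delta(S)\cong F$, and $\delta(F)=F\neq 0=\delta(S)/\delta(S)$ because $F$ is semisimple. Your justification of the rule breaks at exactly the singularity-transfer point you flag elsewhere: an ideal $P\supseteq I$ such that $S/P$ has a faithful simple module singular over $S$ need not yield an ideal $P/I$ over which that module is still singular as an $S/I$-module (in the example the annihilator of the simple module is essential in $T_2(F)$ but is zero, hence not essential, in $F$), so $P/I$ can drop out of the intersection computing $\delta(S/I)$. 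This matters because the last line of (2) needs $\delta(R)\times\{0\}\subseteq\delta(D(R,V))$ (so that $(a+p,v)=(a+p,0)+(0,v)$ lands in $\delta(D(R,V))$), and you obtain that inclusion only via the false quotient rule; your fallback citation, as you frame it, covers only $\{0\}\oplus V\subseteq\delta(D(R,V))$. The paper sidesteps all of this by importing the full description of $\delta(D(R,V))$ from \cite[Proposition 3.11]{GO}; to repair your argument you should quote that computation for both inclusions rather than derive the first-coordinate one from the quotient rule.
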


\begin{proof} (1) Let $r\in R$. There exists $e^2 = e\in D(R, V)$ such that $e\in comm^2(r, 0)$ and
$(r,0) + e\in \delta(D(R, V))$. Since $e\in D(R, V)$, $e$ has the
form such that $(p, v)^2 = (p, v)$ and $p^2 = p$. Then $e = (p,
v)\in comm^2(r, 0)$ implies that $p\in comm^2(r)$ and $r + p\in
\delta(R)$ since $(r+p,v)\in \delta(D(R, V))$ and by
\cite[Proposition 3.11]{GO}. Hence $R$ is $\delta$-quasipolar.

(2) Assume that (i), (ii) and (iii) hold. Let $(r, v)\in D(R, V)$.
There exists $p^2 = p\in comm^2(r)$ such that  $r + p\in
\delta(R)$. By (iii), $(0, V)\subseteq \delta(D(R, V))$. Then $(r,
v) + (p, 0) = (r + p, v)\in \delta (D(R, V))$.
\end{proof}

As an application of Dorroh extensions we consider following
example. This example also shows that in the Proposition \ref{dor}
(2), the conditions (i), (ii) and (iii) are not superfluous.

\begin{ex}{\rm Consider the ring $D(\Bbb Z, \Bbb Q)$. Then $D(\Bbb Z, \Bbb Q)\cong \Bbb Z \times \Bbb
Q$. Then $\delta(\Bbb Z\times \Bbb Q) = (0)\times \Bbb Q$. Since
$\Bbb Z$ is not $\delta$-quasipolar,  $D(\Bbb Z, \Bbb Q)$ is not
$\delta$-quasipolar.}
\end{ex}

Let $R$ and $S$ be any ring and $M$ an $(R,S)$-bimodule. Consider
the ring of the formal upper triangular matrix ring $T =
\begin{bmatrix} R&M\\0&S\end{bmatrix}$. It is well known that
$\delta(T)\subseteq \begin{bmatrix}
\delta(R)&M\\0&\delta(S)\end{bmatrix}$. However, if $R = T = F$ is
a field, then $\delta(T) =
\begin{bmatrix} 0&F\\0&F\end{bmatrix}$.

The following example illustrate the  $\delta$-quasipolarity of
full matrix rings and upper triangular matrix rings depend on the
coefficient ring.

\begin{ex}\label{ustuc}\begin{enumerate}{\rm \item Consider the ring $R = \begin{bmatrix}\Bbb Z_2&\Bbb Z_2\\0&\Bbb
Z_2\end{bmatrix}$. Then $J(R) = \begin{bmatrix}0&\Bbb
Z_2\\0&0\end{bmatrix}$ and $\delta(R) =
\begin{bmatrix}0&\Bbb Z_2\\0&\Bbb Z_2\end{bmatrix}$. $R$ is
$\delta$-quasipolar.\item As noted in Example \ref{ikin}, the ring
$\Bbb Z_3$ is semisimple and therefore $\delta$-quasipolar.
However, the ring $\begin{bmatrix}\Bbb Z_3& \Bbb Z_3\\0&\Bbb
Z_3\end{bmatrix}$ is not $\delta$-quasipolar.
\item Let $A$ denote the matrix $\begin{bmatrix}$$0&1\\$$ 0&0$
$\end{bmatrix}\in{\rm Mat}_{2}(\Bbb Z)$. For any $P^2 = P\in
comm^2(A)$, the matrix $P$ has the form $P = $
$\begin{bmatrix}$$x&y\\$$ 0&x$ $\end{bmatrix}$ with $x^2 = x$ and
$2xy = y$ where $x, y\in \Bbb Z$. This would imply that $P$ is the
zero matrix or the identity matrix. Since $\delta(\Bbb Z) = 0$,
$\delta({\rm Mat}_{2}(\Bbb Z)) = 0$.  In consequence, $A + P$ can
not be in $\delta({\rm Mat}_{2}(\Bbb Z))$.  Therefore
Mat$_{2}(\Bbb Z)$ is not $\delta$-quasipolar.
\item Let $A =
\begin{bmatrix}$$1&0\\$$ 0&0$ $\end{bmatrix}\in T_{2}(\Bbb
Z)$. The idempotents of $T_2(\Bbb Z)$ are zero, identity,
$\begin{bmatrix}1&0\\0&0\end{bmatrix}$
$\begin{bmatrix}0&0\\0&1\end{bmatrix}$
$\begin{bmatrix}0&y\\0&1\end{bmatrix}$,
$\begin{bmatrix}1&y\\0&0\end{bmatrix}$ where $y$ is an arbitrary
integer. Since $A$ commutes with only zero, identity,
$\begin{bmatrix}1&0\\0&0\end{bmatrix}$ and
$\begin{bmatrix}0&0\\0&1\end{bmatrix}$, among these idempotents
there is no idempotent $P$ such that $A + P\in \delta(T_2(\Bbb
Z))$ since $\delta(T_2(\Bbb Z)) =
\begin{bmatrix}0&\Bbb Z\\0&0\end{bmatrix}$.
Hence $T_2(\Bbb Z)$ is not $\delta$-quasipolar. }
\end{enumerate}
\end{ex}

\section{Weakly $\delta$-quasipolar Rings} In this section, we introduce
an upper   class of $\delta$-quasipolar rings, namely,  weakly
$\delta$-quasipolar rings, and we give some properties of such
rings.

\begin{df}{\rm Let $R$ be a ring and $a\in R$. The element $a$ is called {\it
weakly $\delta$-quasipolar} if there exists $p^2 = p\in comm(a)$
such that $a + p\in \delta(R)$, and $p$ is called a {\it weakly
$\delta$-spectral idempotent}. Then $R$ is called {\it weakly
$\delta$-quasipolar} if every element of $R$ is weakly
$\delta$-quasipolar.}
\end{df}

An element of a ring is called {\it strongly $J$-clean} \cite{Ch}
provided that it can be written as the sum of an idempotent and an
element in its Jacobson radical that commute. A ring is {\it
strongly $J$-clean} in case each of its elements is strongly
$J$-clean.

\begin{ex}{\rm (1) Every semisimple ring and every Boolean ring is
weakly $\delta$-quasipolar, since $\delta$-quasipolar rings are
weakly
$\delta$-quasipolar.\\
(2) Every strongly $J$-clean ring is weakly $\delta$-quasipolar. }
\end{ex}

\begin{note}{\rm Let $R = \prod^n_{i=1}R_i$ be a finite direct
product of rings.
 It is easily checked that for any $a = (x_1, x_2,\dots ,x_n)\in R$,
 $comm(a) = \prod^n_{i=1}comm(x_i)$ and $a\in R$ is an idempotent
 in $R$ if and only if $x_i$ is an idempotent in $R_i$ for
 each $(i = 1, 2,\dots ,n)$.}
 \end{note}

\begin{prop}\label{don} Let $f : R\rightarrow S$ be a surjective ring
homomorphism. If $R$ is weakly $\delta$-quasipolar, then  $S$ is
weakly $\delta$-quasipolar.
\end{prop}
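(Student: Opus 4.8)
The plan is to transport a weakly $\delta$-spectral decomposition from $R$ to $S$ along the surjection $f$, then verify that $f$ carries $\delta(R)$ into $\delta(S)$. Fix $s\in S$. By surjectivity choose $a\in R$ with $f(a)=s$, and apply the hypothesis to obtain an idempotent $p=p^2\in comm(a)$ with $a+p\in\delta(R)$. Put $q=f(p)$. Then $q^2=f(p^2)=f(p)=q$ is an idempotent of $S$, and from $pa=ap$ we get $qs=f(p)f(a)=f(a)f(p)=sq$, so $q\in comm(s)$. Here the hypothesis only demands $p\in comm(a)$, and the relation $pa=ap$ passes through any ring homomorphism verbatim; this is precisely why the weakly variant behaves well under surjections, whereas the double-commutant condition defining ordinary $\delta$-quasipolarity does not transport so cleanly. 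Finally $s+q=f(a)+f(p)=f(a+p)$, so once we know $f(\delta(R))\subseteq\delta(S)$ we conclude $s+q\in\delta(S)$, making $q$ a weakly $\delta$-spectral idempotent for $s$. As $s$ was arbitrary, $S$ is weakly $\delta$-quasipolar.

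The only nonformal step, and the main obstacle, is establishing $f(\delta(R))\subseteq\delta(S)$ for the surjection $f$ with kernel $I=\ker f$. I would prove this using characterization (4) of Lemma \ref{besli}, since it describes $\delta$ as an intersection of two-sided ideals and such intersections interact cleanly with the correspondence theorem. Writing $S\cong R/I$, the ideals $\bar P$ of $S$ for which $S/\bar P$ possesses a faithful singular simple module are exactly $\bar P=P/I$ for two-sided ideals $P\supseteq I$ of $R$ with $R/P\cong S/\bar P$ possessing such a module. Hence, by Lemma \ref{besli}(4), $\delta(S)=\left(\bigcap_{P} P\right)/I$, the intersection running over those $P\supseteq I$, while $\delta(R)$ is the intersection of the same kind of ideals taken over all of $R$, with no constraint $P\supseteq I$. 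Intersecting over the larger index family can only shrink the result, so $\delta(R)\subseteq\bigcap_{P\supseteq I}P$, and therefore $f(\delta(R))=(\delta(R)+I)/I\subseteq\left(\bigcap_{P\supseteq I}P\right)/I=\delta(S)$.

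I expect the bookkeeping around this inclusion to be the only place demanding care: one must check that ``$R/P$ has a faithful singular simple module'' is literally the same condition as ``$S/\bar P$ has a faithful singular simple module'' under the isomorphism $S/\bar P\cong R/P$ (immediate, as the property depends only on the isomorphism type of the quotient ring), and that restricting to the smaller index family $\{P\supseteq I\}$ enlarges the intersection, yielding the inclusion in the asserted direction. Everything else—idempotency of $q$, the relation $qs=sq$, and $s+q=f(a+p)$—is purely formal and follows from $f$ being a surjective ring homomorphism. An alternative route through characterization (5) of Lemma \ref{besli} is conceivable but messier, since it would force one to control images of internal direct sums and to verify that $f$ sends semisimple right ideals to semisimple right ideals; the two-sided-ideal description in (4) sidesteps all of that.
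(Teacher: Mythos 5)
Your first paragraph is exactly the paper's proof: lift $s$ to $a$, take a commuting idempotent $p$ with $a+p\in\delta(R)$, push everything through $f$, and invoke $f(\delta(R))\subseteq\delta(S)$. The paper, however, simply cites this last inclusion from \cite{Zh} rather than proving it, so the only place where you genuinely diverge is your attempted proof of $f(\delta(R))\subseteq\delta(S)$ via Lemma \ref{besli}(4) --- and that is where there is a real gap. Your claim that ``$R/P$ has a faithful singular simple module'' depends only on the isomorphism type of the quotient ring $R/P$ is false: the module in question is a simple right $R$-module $M\cong R/\mathfrak{m}$ with annihilator $P$, and its singularity means that the maximal right ideal $\mathfrak{m}$ is \emph{essential in $R$}, which is a statement about how $\mathfrak{m}$ sits inside $R$, not about the ring $R/P$. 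Concretely, take $R=T_2(F)$ and $I=J(R)$, so $S\cong F\times F$. The ideal $P=\left[\begin{smallmatrix}0&F\\0&F\end{smallmatrix}\right]$ contains $I$ and belongs to the family for $R$ (it is an essential maximal right ideal, hence the annihilator of a faithful singular simple $R$-module), yet $\bar P=P/I$ does not belong to the family for $S$, since the corresponding maximal right ideal of $F\times F$ is a direct summand and the simple module is projective, not singular. So the two index families are not ``exactly'' in bijection, and the step you labelled immediate is precisely the mathematical content of the inclusion.

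The argument is repairable, because only one direction of your correspondence is actually needed: to get $\delta(R)\subseteq\bigcap_{\bar P}f^{-1}(\bar P)$ it suffices that every $\bar P$ in the family for $S$ pull back to some $P$ in the family for $R$. This does hold, but requires an argument you omitted: if $\mathfrak{m}\supseteq I$ is a right ideal with $\mathfrak{m}/I$ essential in $R/I$, then $\mathfrak{m}$ is essential in $R$ (given a nonzero right ideal $K$ with $K\cap I=0$, the image $(K+I)/I$ is nonzero, meets $\mathfrak{m}/I$ nontrivially, and pulling back produces a nonzero element of $K\cap\mathfrak{m}$ because $I\subseteq\mathfrak{m}$). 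With that lemma, a singular simple $S$-module becomes a singular simple $R$-module with annihilator $P=f^{-1}(\bar P)$, and your intersection argument goes through. Alternatively, you could avoid all of this by characterization (2) of Lemma \ref{besli} or by citing \cite{Zh} as the paper does. The observation in your first paragraph --- that the single-commutant condition, unlike $comm^2$, transports verbatim along any homomorphism --- is correct and is indeed the reason the proposition is stated for the weak notion.
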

\begin{proof}  Let $s\in S$ with $s = f(r)$ where $r\in R$. There
exists an idempotent $p\in comm(r)$ such that $r + p\in
\delta(R)$. Let $q =f(p)$. Then $q^2 = q\in comm f(r) = comm(s)$.
By \cite{Zh}, $f(\delta(R))\subseteq \delta(S)$. Then $s + q =
f(r) + f(p) = f(r + p)\in f(\delta(R))\subseteq \delta(S)$. Hence
$S$ is weakly $\delta$-quasipolar.
\end{proof}

\begin{cor}\label{subring1} Every direct summand of a weakly $\delta$-quasipolar ring
is weakly $\delta$-quasipolar.
\end{cor}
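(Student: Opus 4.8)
The plan is to realize the direct summand as the image of a surjective ring homomorphism and then invoke Proposition \ref{don}. Suppose $S$ is a direct summand of a weakly $\delta$-quasipolar ring $R$. Then $R\cong S\times T$ for some ring $T$; equivalently, there is a central idempotent $e\in R$ such that $S=eR$, viewed as a ring with identity $e$, and $T=(1-e)R$.

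First I would exhibit the canonical projection $\pi:R\rightarrow S$ given by $\pi(s,t)=s$ in the product description, or equivalently $\pi(r)=er$ in the idempotent formulation. This map is additive and multiplicative, and it sends the identity of $R$ to the identity of $S$: in $S\times T$ the identity is $(1_S,1_T)$, and $\pi(1_S,1_T)=1_S$. Hence $\pi$ is a surjective ring homomorphism.

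Then I would apply Proposition \ref{don} directly. Since $R$ is weakly $\delta$-quasipolar and $\pi:R\rightarrow S$ is a surjective ring homomorphism, the proposition yields that $S$ is weakly $\delta$-quasipolar, which is the desired conclusion.

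There is essentially no obstacle here: the whole content of the corollary is the observation that a direct summand of a ring is a homomorphic image of it under the projection map. The only point meriting a moment's care is verifying that $\pi$ preserves the identity, so that it qualifies as a ring homomorphism in the sense required by Proposition \ref{don}; as noted above this is immediate, and the result follows.
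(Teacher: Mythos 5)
Your proof is correct and matches the paper's intent exactly: the corollary is stated immediately after Proposition \ref{don} precisely because a direct summand is the image of the ring under the canonical projection, which is a surjective ring homomorphism. The paper gives no written proof, and your argument (including the check that the projection preserves the identity) is the standard one it relies on.
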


The converse statement of Proposition \ref{don} need not hold in
general. There are rings $R$, $S$ and an epimorphism $f :
R\rightarrow S$ with $S$ is weakly $\delta$-quasipolar but $R$ is
not.
\begin{ex}\label{don1}{\rm Consider the field of rational numbers $\Bbb Q$ as a $\Bbb Z$-module.
There exists an index  set $I$ and an epimorphism $f :
\bigoplus_{i\in I}R_i\rightarrow \Bbb Q$ where $R_i = \Bbb Z$.
$\Bbb Q$ is weakly $\delta$-quasipolar but $\bigoplus_{i\in I}R_i$
is not weakly $\delta$-quasipolar since $\Bbb Z$ is not weakly
$\delta$-quasipolar and by Corollary \ref{subring1}.}
\end{ex}

\begin{prop}\label{dik} Let $R = \prod^n_{i=1}R_i$ be a finite direct product of rings.
$R$ is weakly $\delta$-quasipolar if and only if each $R_i$ is
weakly $\delta$-quasipolar for $(i=1,2,\dots ,n)$.
\end{prop}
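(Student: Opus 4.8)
The plan is to prove the two implications separately, obtaining the forward direction immediately from earlier results and reserving the substantive work for the converse. For the forward direction, suppose $R=\prod_{i=1}^n R_i$ is weakly $\delta$-quasipolar. Each factor $R_i$ is a direct summand of $R$, so Corollary \ref{subring1} applies verbatim and shows that $R_i$ is weakly $\delta$-quasipolar; equivalently, one may invoke Proposition \ref{don} with the canonical projection $\pi_i\colon R\to R_i$, which is a surjective ring homomorphism. Either way this half costs no computation.

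The crux of the converse is the identity $\delta\left(\prod_{i=1}^n R_i\right)=\prod_{i=1}^n\delta(R_i)$, which I would establish using characterization (1) of Lemma \ref{besli}, namely that $\delta(R)$ is the intersection of all essential maximal right ideals of $R$. By induction it suffices to treat $n=2$. First I would record that every maximal right ideal of $R_1\times R_2$ has the form $M_1\times R_2$ or $R_1\times M_2$ for a maximal right ideal $M_i$ of $R_i$, and that a product right ideal $I_1\times I_2$ is essential in $R_1\times R_2$ exactly when each $I_i$ is essential in $R_i$ (this follows from $(K_1\times K_2)\cap(I_1\times I_2)=(K_1\cap I_1)\times(K_2\cap I_2)$ together with the fact that every right ideal of the product is a product of right ideals). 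Hence the essential maximal right ideals of $R_1\times R_2$ are precisely $M_1\times R_2$ with $M_1$ essential maximal in $R_1$, and $R_1\times M_2$ with $M_2$ essential maximal in $R_2$. Intersecting factor by factor gives $\bigcap_{M_1}(M_1\times R_2)=\delta(R_1)\times R_2$ and $\bigcap_{M_2}(R_1\times M_2)=R_1\times\delta(R_2)$, whose intersection is $\delta(R_1)\times\delta(R_2)$, as desired.

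With this identity secured, the converse reduces to a componentwise construction licensed by the Remark preceding Proposition \ref{don}. Given $a=(x_1,\dots,x_n)\in R$ with each $R_i$ weakly $\delta$-quasipolar, I would choose for each $i$ an idempotent $p_i\in comm(x_i)$ with $x_i+p_i\in\delta(R_i)$, and set $p=(p_1,\dots,p_n)$. By the Remark, $p$ is idempotent in $R$ and $p\in\prod_i comm(x_i)=comm(a)$, while $a+p=(x_1+p_1,\dots,x_n+p_n)\in\prod_i\delta(R_i)=\delta(R)$ by the identity above. Thus $a$ is weakly $\delta$-quasipolar, completing the proof.

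I expect the only genuine obstacle to be the identity $\delta(\prod R_i)=\prod\delta(R_i)$; once it is in place the remaining steps are bookkeeping. The delicate point is the essentiality claim—verifying that $M_1\times R_2$ is essential exactly when $M_1$ is—since it is this feature that distinguishes $\delta(R)$ (intersection of \emph{essential} maximal right ideals) from $J(R)$ (intersection of all maximal right ideals) and guarantees that $\delta$ propagates correctly through a finite product.
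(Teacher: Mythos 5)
Your proof is correct and follows essentially the same route as the paper's: the forward direction via Corollary \ref{subring1} and the converse via the componentwise choice of idempotents $p=(p_1,\dots,p_n)$. The only difference is that you explicitly justify the identity $\delta\big(\prod_i R_i\big)=\prod_i\delta(R_i)$ through the essential-maximal-right-ideal characterization --- a step the paper's proof uses silently when it concludes $a+p\in\delta(R)$ --- and your verification of that identity is sound.
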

\begin{proof} One way is clear  from Corollary \ref{subring1}. We
may assume that $n = 2$ and $R_1$ and $R_2$ are weakly
$\delta$-quasipolar. Let $a = (x_1, x_2)\in R$. There exist
idempotents $p_i\in comm(x_i)$ such that $x_i + p_i\in
\delta(R_i)$ for $(i=1,2)$. Then $p = (p_1, p_2)$ is an idempotent
in $R$ and $p\in comm(a)$ and $a + p\in \delta(R)$. Hence $R$ is
weakly  $\delta$-quasipolar.
\end{proof}

In \cite{GO}, Gurgun and Ozcan introduce and investigate
properties of $\delta_r$-clean rings. Motivated by this work
 strongly $\delta_r$-clean rings can be defined  as follows.

\begin{df} \rm An element $x\in R$ is called {\em strongly
$\delta_r$-clean} provided that there exist an idempotent $e\in R$
and an element $w\in \delta_r$ such that $x=e+w$ and $ew = we$. A
ring $R$ is called {\em strongly $\delta_r$-clean} in case every
element in $R$ is strongly $\delta_r$-clean.
\end{df}

Any strongly $J$-clean ring is strongly $\delta_r$-clean. But the
converse need not be true, for example any commutative semisimple
ring which is not a Boolean ring is such a ring.

Note that in the following theorem it is proved that  the notions
of strongly $\delta_r$-clean rings and weakly $\delta$-quasipolar
rings coincide.

\begin{thm}\label{burc} Let $R$ be a ring.
Then $R$ is a weakly $\delta$-quasipolar ring if and only if it is
strongly $\delta_r$-clean.\end{thm}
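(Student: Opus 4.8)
The plan is to observe that weak $\delta$-quasipolarity and strong $\delta_r$-cleanness are literally the same condition read with the opposite sign, once one checks that the two differently phrased commutation requirements agree. I would record the following elementary identity first and use it in both directions: for any $b\in R$ and any idempotent $q$,
\[
q(b+q)-(b+q)q = qb-bq = q(b-q)-(b-q)q,
\]
so that for $w=b+q$ or $w=b-q$ one has $qw=wq$ if and only if $q\in comm(b)$. This is exactly what lets me pass between ``$q$ commutes with the whole element $b$'' (the weakly $\delta$-quasipolar requirement) and ``$q$ commutes with the $\delta(R)$-summand $w$'' (the strongly $\delta_r$-clean requirement). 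I will also use repeatedly that $comm(-b)=comm(b)$ and that $\delta(R)$, being an ideal, is closed under negation.

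For the forward implication, assume $R$ is weakly $\delta$-quasipolar and fix $x\in R$. Applying the definition to $-x$ gives an idempotent $p\in comm(-x)=comm(x)$ with $-x+p\in\delta(R)$. Put $w:=x-p=-(-x+p)\in\delta(R)$, so that $x=p+w$ exhibits $x$ as a sum of an idempotent and an element of $\delta(R)$; the identity (with $b=x$, $q=p$, $w=b-q$) together with $p\in comm(x)$ yields $pw=wp$. Hence $x$ is strongly $\delta_r$-clean. This is the $comm$-level analogue of the passage from $\delta$-quasipolar to $\delta_r$-clean in Lemma \ref{sait}.

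For the converse, assume $R$ is strongly $\delta_r$-clean and fix $a\in R$. Applying the decomposition to $-a$, write $-a=e+w$ with $e^2=e$, $w\in\delta(R)$ and $ew=we$. Then $a+e=-w\in\delta(R)$, and the identity (with $b=-a$, $q=e$, $w=b-e$) turns $ew=we$ into $e\in comm(-a)=comm(a)$; thus $e$ is a weakly $\delta$-spectral idempotent for $a$, and $a$ is weakly $\delta$-quasipolar. The only step that requires genuine attention --- the ``main obstacle'', though it is a mild one --- is the reconciliation of the two commutation hypotheses, and this is disposed of once and for all by the displayed identity; what remains is the routine sign bookkeeping of feeding $-x$ (respectively $-a$) into the hypothesis.
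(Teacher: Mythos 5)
Your proof is correct and follows essentially the same route as the paper: both directions come down to applying the hypothesis to a negated (or shifted) element and transporting the commutation condition across the sign change. The only cosmetic difference is that in the forward direction the paper feeds $-1+a$ into the definition and uses the complementary idempotent $1-p$, whereas you feed in $-x$ and keep $p$ itself; this changes nothing of substance.
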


\begin{proof}  Let $R$
be a weakly $\delta$-quasipolar ring and $a\in R$. There exits
$p^2 = p\in comm(-1+a)$ such that $-1+a + p\in \delta(R)$. Then
$a-(1- p)\in \delta(R)$ and $a(1-p) = (1-p)a$. Hence $R$ is a
strongly $\delta_r$-clean ring. Conversely, assume that $R$ is is
a strongly $\delta_r$-clean ring. Let $a\in R$. Since $-a\in R$,
by assumption there exists an idempotent $p\in R$ such that $-a-
p\in\delta(R)$ and $(-a)p = p(-a)$. So $R$ is is a weakly
$\delta$-quasipolar ring.
\end{proof}

Theorem \ref{burc} states that the weakly $\delta$-quasipolarity
of a ring is equivalent to the strongly $\delta_r$-cleanness of
this ring. But the following example shows that there are rings in
which there exists a weakly $\delta$-quasipolar element which is
not strongly $\delta_r$-clean and there exists a strongly
$\delta_r$-clean element which is not weakly $\delta$-quasipolar.

\begin{ex}\rm{ Let $R=\Bbb Z$ and $a=1 \in R$. There exists
no idempotent $p$ such that $a+p \in \delta(R)$. Then $a$ is not
weakly $\delta$-quasipolar. Let $p=1 \in R$.  Since $a-p \in
\delta(R)$, $a$ is strongly $\delta_r$-clean. On the other hand,
if $a=-1 \in R$, then there exists no idempotent $p$ such that
$a-p \in \delta(R)$. Then $a$ is not strongly $\delta_r$-clean.
Let $p=1 \in R$. Since $a+p \in \delta(R)$, $a$ is weakly
$\delta$-quasipolar. }
\end{ex}

\begin{thm}\label{ogur} Let $R$ be a local ring with non-zero maximal ideal. Then the following are
equivalent.
\begin{enumerate}
\item[(1)] $R$ is weakly $\delta$-quasipolar;
\item[(2)] $R$ is strongly $J$-clean;
\item[(3)] $R$ is uniquely clean;
\item[(4)] $R/J(R)\cong \Bbb {Z}_2$;
\item[(5)] $R/\delta(R)\cong \Bbb {Z}_2$.
\end{enumerate}
\end{thm}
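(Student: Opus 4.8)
The plan is to collapse all five conditions to the single assertion that the residue division ring $R/J(R)$ is isomorphic to $\Bbb{Z}_2$, after first identifying $\delta(R)$ explicitly. The decisive preliminary step is to prove that for a local ring $R$ with $J(R)\neq 0$ one has $\delta(R)=J(R)$. Since $R$ is local, $J(R)$ is its unique maximal right ideal, so by Lemma \ref{besli}(1) it is enough to show that $J(R)$ is an essential right ideal. To see this I would pick any non-zero right ideal $K$ and a non-zero element $k\in K$; as $R$ is local, $k$ is either a unit, whence $K=R$ and $K\cap J(R)=J(R)\neq 0$, or $k\in J(R)$, whence $0\neq k\in K\cap J(R)$. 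In both cases $K$ meets $J(R)$, so $J(R)$ is essential and the intersection in Lemma \ref{besli}(1) is exactly $J(R)$. This also pinpoints where the hypothesis of a non-zero maximal ideal is used: for a division ring one has $\delta(R)=R$, which would destroy the equivalence. In particular $(4)\Leftrightarrow(5)$ follows at once.

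Having established $\delta(R)=J(R)$, I would record the two elementary local-ring facts that power the rest: the only idempotents of $R$ are $0$ and $1$ (both central), and every element is either a unit or lies in $J(R)$, so that $D:=R/J(R)$ is a division ring in which each non-zero element is the image of a unit of $R$. Each of the remaining equivalences is then obtained by splitting a general $a\in R$ into the unit case and the $J(R)$ case.

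For $(1)\Leftrightarrow(4)$: since the only idempotents are $0,1$ and both commute with $a$, weak $\delta$-quasipolarity of $a$ amounts to $a\in\delta(R)=J(R)$ (take $p=0$) or $a+1\in J(R)$ (take $p=1$). Elements of $J(R)$ always satisfy the first alternative, so the condition has content only for units, for which it says $\bar a=\overline{-1}$ in $D$; thus $R$ is weakly $\delta$-quasipolar exactly when $D^{\times}=\{\overline{-1}\}$, i.e. $D\cong\Bbb{Z}_2$. For $(2)\Leftrightarrow(4)$: a strongly $J$-clean decomposition $a=e+j$ with $e\in\{0,1\}$ and $j\in J(R)$ forces $a\in J(R)$ or $a\in 1+J(R)$, which says precisely that $D$ has two elements; the converse reverses this. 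For $(3)\Leftrightarrow(4)$: with idempotents $0,1$, the element $a-0$ is a unit iff $\bar a\neq\bar 0$ and $a-1$ is a unit iff $\bar a\neq\bar 1$, so both idempotents yield clean decompositions exactly when $\bar a\notin\{\bar 0,\bar 1\}$; hence the idempotent in a clean decomposition is unique for every $a$ if and only if $D=\{\bar 0,\bar 1\}\cong\Bbb{Z}_2$.

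Chaining these gives $(1)\Leftrightarrow(2)\Leftrightarrow(3)\Leftrightarrow(4)\Leftrightarrow(5)$. I expect the essentiality argument of the first paragraph, and thereby the identity $\delta(R)=J(R)$, to be the genuine obstacle; once it is in place every other implication is just bookkeeping with the dichotomy ``unit or element of $J(R)$'' inside the division ring $R/J(R)$. The only implication I would be tempted to shortcut by citing a known characterization of uniquely clean rings is $(3)$, but the direct computation above avoids any appeal to external results.
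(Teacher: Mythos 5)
Your proposal is correct, and it is more self-contained than the paper's proof, though the underlying reductions are the same in spirit. The paper's argument silently relies on the identity $\delta(R)=J(R)$ for a local ring with non-zero maximal ideal (it simply writes ``by $J(R)=\delta(R)$''), whereas you actually prove it via the essentiality of $J(R)$ using Lemma \ref{besli}(1); that is the one genuinely non-trivial ingredient and you are right to flag it as the crux, and also right that it is exactly where the hypothesis $J(R)\neq 0$ enters (for a division ring $\delta(R)=R$). After that, the paper proves $(1)\Leftrightarrow(2)$ directly (essentially your argument, via the shift $a\mapsto -1+a$ as in Theorem \ref{burc}), but outsources $(2)\Leftrightarrow(3)$ to a cited lemma of Chen and $(3)\Leftrightarrow(4)$ to a cited theorem of Nicholson--Zhou, and gets $(5)\Rightarrow(1)$ from Proposition \ref{temel}; you instead derive all of $(1),(2),(3)$ from $(4)$ by the elementary ``unit or element of $J(R)$'' dichotomy together with the fact that a local ring has only trivial idempotents. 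What your route buys is independence from the external references (in particular a direct two-line proof that uniquely clean is equivalent to $R/J(R)\cong\Bbb Z_2$ in the local case); what the paper's route buys is brevity by citation. Both are valid proofs of the theorem.
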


\begin{proof} Let $R$ be a local ring with non-zero
maximal ideal. \\
$(1)\Leftrightarrow(2)$   Assume that $R$ is weakly
$\delta$-quasipolar. Let $a\in R$. There exists $p^2=p \in
comm(-1+a)$ such that $-1+a+p \in \delta(R)$. Then $a-(1-p) \in
\delta(R)$. Since $p \in comm(-1+a)$, $pa=ap$. Hence $R$ is
strongly $J$-clean by $J(R)=\delta(R)$.
Similarly, the rest is clear.  \\
$(2)\Leftrightarrow(3)$ follows from \cite[Lemma 4.2]{Ch}.\\
$(3)\Leftrightarrow(4)$ follows from \cite[Theorem 15]{NZ}.\\
$(1)\Rightarrow(5)$ Let $R$ be weakly $\delta$-quasipolar and
$\overline{0}\neq\overline{a}=a+\delta(R)\in R/\delta(R)$, we show
that $\overline{a}=\overline{1}$. Then there exist an idempotent
$p\in R$  such that $-a+p \in \delta(R)$ and $p^2=p \in comm(-a)$.
Since $R$ is a local, $p=0$ or $p=1$. If $p=0$, this contradicts
$\overline{0}\neq\overline{a}$.   Therefore, $p=1$. It follows
that $\overline{a}=\overline{1}$.\\
$(5)\Rightarrow(1)$ It follows from Proposition \ref{temel}.
\end{proof}

\end{document}